\newtheorem{thm}{Theorem}[section]
\newtheorem{lem}[thm]{Lemma}
\newtheorem{prop}[thm]{Proposition}
\theoremstyle{remark}
\newtheorem{ntn}[thm]{Notation}
\newtheorem{rem}[thm]{Remark}
\newtheorem{dft}[thm]{Definition}
\begin{document}

\title[Fixed point sets of distinguished collections]{On fixed point sets of distinguished collections
for groups of parabolic characteristic}

\author{John Maginnis}
\author{Silvia Onofrei}
\address{Department of Mathematics, Kansas State University, 137 Cardwell
Hall, Manhattan, Kansas 66506, Email: maginnis@math.ksu.edu}
\address{Department of Mathematics, Kansas State University, 137 Cardwell
Hall, Manhattan, Kansas 66506, Email: onofrei@math.ksu.edu}
\date{Friday, 31 August 2007}
\subjclass{20J05, 20D08, 20C34, 51E25, 55P91}

\maketitle
\begin{abstract}
We determine the nature of the fixed point sets of groups of order $p$, acting on complexes of distinguished $p$-subgroups (those $p$-subgroups containing $p$-central elements in their centers). The case when $G$ has parabolic characteristic $p$ is analyzed in detail.
\end{abstract}

\section{Introduction}
The subgroup complexes associated to suitably chosen collections of $p$-subgroups are relevant to the understanding of the $p$-local structure of the underlying group $G$, and also provide valuable tools for investigating the modular representation theory and the mod-$p$ cohomology of the group $G$.

\medskip
This paper continues the systematic study, started in \cite{mgo3}, of certain collections of $p$-subgroups, which we call distinguished. These are subcollections of the standard collections of $p$-subgroups and which consist of those $p$-subgroups which contain $p$-central elements in their centers.

\medskip
A group $G$ has parabolic characteristic $p$ if all the $p$-local subgroups which contain a Sylow $p$-subgroup of $G$ have characteristic $p$. We study the homotopy type of the fixed point sets of subgroups of order $p$ acting on the complex $\Delta$ of distinguished $p$-radical subgroups of $G$, for groups of parabolic characteristic $p$. Let $P=\langle x \rangle$ be such a $p$-subgroup. If $x$ is $p$-central then the fixed point set is contractible. If $x$ is not of central type, then the homotopy type of the corresponding fixed point set is determined by the group structure of $C_G(x)$.
If $C_G(x)$ has characteristic $p$, then $\Delta^P$ is again contractible.
If we assume that $C_G(x)$ does not have characteristic $p$, and that $\overline{C}=C_G(x)/O_p(C_G(x))$ has parabolic characteristic $p$, then the fixed point set $\Delta ^P$ is equivariantly homotopy equivalent to the complex of distinguished $p$-radical subgroups of $\overline{C}$.

\medskip
In Section $2$, notation is introduced and a few basic results are reviewed. In Section $3$, two varieties of collections of $p$-subgroups are defined and various homotopy properties are described; the fixed point sets of $p$-central elements acting on the corresponding complexes are shown to be contractible. In Section $4$, under certain hypotheses, the fixed point sets of subgroups of order $p$ of noncentral type are shown to be equivariantly homotopy equivalent to the complex for a quotient of the centralizer. In Section $5$ we consider a few examples where $G$ is a sporadic finite simple group, applications to modular representation theory are also given.\\

\section{Notation, terminology and standard results}

Throughout this paper $G$ is a finite group and $p$ a prime dividing its order.

\smallskip
A $p$-subgroup $R$ of $G$ is called $p$-{\it radical} if $R = O_p(N_G(R))$, where $O_p(H)$ is the largest normal
$p$-subgroup of $H$. Every $p$-subgroup $Q$ of $G$ is contained in a $p$-radical subgroup of $G$ uniquely
determined by $Q$ and $G$. This is called the {\it radical closure} of $Q$ in $G$ and it is the last term $R_Q$ of the
chain $P_{i+1} = O_p(N_G(P_i))$ starting with $Q = P_0$.  It is easy to see that $N_G(Q) \leq N_G(R_Q)$. A $p$-subgroup $R$ is called $p$-{\it centric} if $Z(R)$ is a Sylow $p$-subgroup of $C_G(R)$, in which case $C_G(R) = Z(R) \times H$, with $H$ a subgroup of order relatively prime to $p$.

\medskip
A {\it collection} $\mathcal{C}$ of $p$-subgroups of $G$ is a set of $p$-subgroups which is closed under conjugation; a collection is a $G$-poset under the inclusion relation with $G$ acting  by conjugation. The nerve $|\mathcal{C}|$ is the simplicial complex which has as simplices proper inclusion chains in $\mathcal{C}$; the correspondence $\mathcal{C} \rightarrow |\mathcal{C}|$ allows assignment of topological concepts to posets \cite[Sect.1]{qu78}. A collection $\mathcal{C}$ is contractible if $|\mathcal{C}|$ is contractible. A poset map is a $G$-homotopy equivalence if and only if the induced map on $H$-fixed points is a homotopy equivalence for all $H \leq G$; see \cite[1.3]{tw91}.

\begin{ntn}For $Q \leq G$ a subgroup let $\mathcal{C}^P = \lbrace Q \in \mathcal{C} \; | \; P \leq N_G(Q) \rbrace$
denote the subcollection of $\mathcal{C}$ fixed under the action of $P$.
Next $\mathcal{C}_{>P} = \lbrace Q \in \mathcal{C} \; | \; P<Q \rbrace$.
Similarly define $\mathcal{C}_{\geq P}$ and also $\mathcal{C}_{< P}$ and $\mathcal{C}_{\leq P}$.
We will also use the notation $\mathcal{C}_{>P}^{\leq H}$ for the set $\mathcal{C}_{>P}^{\leq H} = \lbrace Q
\in \mathcal{C} \; | \;P<Q \leq H \rbrace$.
\end{ntn}

\begin{thm}Let $G$ be a finite group and $\mathcal{C} \subseteq \mathcal{D}$ two collections of subgroups.
\begin{itemize}
\item[(1)]\cite[Prop. 1.7]{tw91} Assume that $\mathcal{D}_{>P}$ is $N_G(P)$-contractible for all $P \in \mathcal{D} \setminus \mathcal{C}$. Then the inclusion $\mathcal{C} \hookrightarrow \mathcal{D}$ is a $G$-homotopy equivalence.
\item[(2)]\cite[Thm.1]{tw91} Assume either that $\mathcal{C}_{\geq P}$ is $N_G(P)$-contractible for all $P \in \mathcal{D}$, or that $\mathcal{C}_{\leq P}$ is $N_G(P)$-contractible for all $P \in \mathcal{D}$. Then the inclusion $\mathcal{C} \hookrightarrow \mathcal{D}$ is a $G$-homotopy equivalence.
\item[(3)]\cite[2.2(3)]{gs} Suppose that $F$ is a $G$-equivariant poset endomorphism of $\mathcal{C}$ satisfying either $F \geq Id_{\mathcal{C}}$ or $F \leq Id_{\mathcal{C}}$. Then, for any collection $\mathcal{C}'$
containing the image of $F$,
the inclusions $F(\mathcal{C}) \subseteq \mathcal{C}' \subseteq \mathcal{C}$ are $G$-homotopy equivalences.
\item[(4)]\cite[in proof Lemma 2.7(1)]{gs} Let $\mathcal{C}$ be a collection of $p$-subgroups that is closed under passage to $p$-overgroups. Let $H$ be an arbitrary $p$-subgroup in $G$. Then the inclusion $\mathcal{C}_{\geq H} \hookrightarrow \mathcal{C}^H$ is a homotopy equivalence.
\end{itemize}
\end{thm}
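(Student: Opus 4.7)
The plan is to unify all four parts through two fundamental tools: Quillen's fiber theorem (Theorem A) and the principle that a poset self-map comparable to the identity is homotopic to it. In each case the equivariance reduces, via the Th\'evenaz--Webb criterion quoted just above, to proving an ordinary homotopy equivalence on $H$-fixed subposets for every $H \leq G$, and the hypotheses have been framed to hand us precisely the input needed on those fixed subposets.

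For (3), the relation $P \leq F(P)$ (respectively $F(P) \leq P$) provides a natural transformation between the poset maps $Id_{\mathcal{C}}$ and $F$, and any such transformation induces a simplicial homotopy on geometric realizations. Hence $F : \mathcal{C} \to F(\mathcal{C})$ and the inclusion $F(\mathcal{C}) \hookrightarrow \mathcal{C}$ are mutually inverse up to homotopy, with any intermediate $\mathcal{C}'$ sandwiched by equivalences; $G$-equivariance is inherited from $F$. Part (4) is then immediate from (3) applied to the endomorphism $F(P) = PH$ of $\mathcal{C}^H$: this is a $p$-subgroup because $H$ normalizes $P$, lies in $\mathcal{C}$ by overgroup-closure, contains $H$, and visibly satisfies $F \geq Id_{\mathcal{C}^H}$ with image in $\mathcal{C}_{\geq H}$.

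For (2), the plan is a direct application of Quillen's Theorem A to the inclusion $\mathcal{C} \hookrightarrow \mathcal{D}$: the fiber over $P \in \mathcal{D}$ is $\mathcal{C}_{\geq P}$, so contractibility of these fibers yields the non-equivariant homotopy equivalence. For the $H$-equivariant version, $H$ fixes $P \in \mathcal{D}^H$ precisely when $H \leq N_G(P)$, so $(\mathcal{C}_{\geq P})^H = (\mathcal{C}^H)_{\geq P}$ is contractible by the assumed $N_G(P)$-contractibility, and Theorem A applied on fixed points concludes. The dual hypothesis on $\mathcal{C}_{\leq P}$ is handled by passing to the opposite poset, using $|\mathcal{P}| = |\mathcal{P}^{op}|$.

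For (1), the plan is induction on the number of $G$-orbits in $\mathcal{D} \setminus \mathcal{C}$, removing one orbit at a time. Choose a minimal orbit $G \cdot P$ in $\mathcal{D} \setminus \mathcal{C}$; the link of $P$ in $|\mathcal{D}|$ is the join $|\mathcal{D}_{<P}| * |\mathcal{D}_{>P}|$, which is contractible because $|\mathcal{D}_{>P}|$ is, so deleting $P$ is a non-equivariant homotopy equivalence, and doing this simultaneously across the orbit with $N_G(P)$-contractibility of $\mathcal{D}_{>P}$ makes the collapse $G$-equivariant. Minimality of $P$ in $\mathcal{D} \setminus \mathcal{C}$ ensures that no remaining $Q \in \mathcal{D} \setminus (G \cdot P)$ outside $\mathcal{C}$ has a conjugate of $P$ strictly above it, so the hypothesis on $\mathcal{D}_{>Q}$ is preserved after deletion and the induction continues. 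I expect this bookkeeping to be the main obstacle: verifying that the orbit-wide collapse commutes with restriction to every $H$-fixed subposet and that the contractibility hypothesis persists at each stage of the induction.
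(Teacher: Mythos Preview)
The paper does not prove Theorem~2.2 at all: it is stated as a toolbox of results quoted from Th\'evenaz--Webb \cite{tw91} and Grodal--Smith \cite{gs}, with no argument given. So there is no ``paper's own proof'' to compare against; the relevant comparison is to the cited sources, and your sketch tracks those faithfully.

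Your treatment of (3) and (4) is exactly the standard one: the natural transformation $Id \Rightarrow F$ (or $F \Rightarrow Id$) gives the homotopy, and for (4) the endomorphism $F(P)=PH$ of $\mathcal{C}^H$ is precisely the map used in \cite{gs}. Your argument for (2) via Quillen's Theorem~A on $H$-fixed subposets, combined with the Th\'evenaz--Webb criterion, is again the standard route; the identification $(\mathcal{C}^H)_{\geq P}=(\mathcal{C}_{\geq P})^H$ and the fact that $N_G(P)$-contractibility restricts to $H$-contractibility for $H\leq N_G(P)$ are the two ingredients, and you have both.

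For (1), your induction on orbits with the ``contractible link'' collapse is correct in outline, and your minimality argument showing that $(\mathcal{D}\setminus G\!\cdot\! P)_{>Q}=\mathcal{D}_{>Q}$ for the remaining $Q\in\mathcal{D}\setminus\mathcal{C}$ is sound. The equivariant step you flag as the obstacle is handled by the same fixed-point check: on $\mathcal{D}^H$ one removes the finitely many $H$-fixed conjugates $gP$, which form an antichain (conjugates have equal order, hence are incomparable), and each has contractible link in $\mathcal{D}^H$ because $(\mathcal{D}_{>gP})^H$ is contractible via the $N_G(gP)$-contraction restricted to $H\leq N_G(gP)$. Since the conjugates are pairwise incomparable, removing one does not disturb the link of another, so the one-at-a-time deletion goes through. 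With that detail filled in, your proof is complete.
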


In what follows $\mathcal{A}_p (G)$ will denote the Quillen collection of nontrivial elementary abelian $p$-subgroups, $\mathcal{S}_p (G)$ the Brown collection of nontrivial $p$-subgroups and $\mathcal{B}_p (G)$ the Bouc collection of nontrivial $p$-radical subgroups. The inclusions $\mathcal{A}_p(G) \subseteq \mathcal{S}_p(G)$ and $\mathcal{B}_p(G) \subseteq \mathcal{S}_p(G)$ are $G$-homotopy equivalences \cite[Thm.2]{tw91}.

\medskip
Let $\mathcal{C}e_p(G)$ denote the subcollection of $\mathcal{S}_p(G)$ consisting of nontrivial $p$-centric subgroups and let $\mathcal{B}^{\text{cen}}_p(G)=\mathcal{C}e_p(G) \cap \mathcal{B}_p(G)$ be the collection of nontrivial $p$-radical and $p$-centric subgroups. These two collections are not in general homotopy equivalent with $\mathcal{S}_p(G)$; however the inclusion map $\mathcal{B}^{\text{cen}}_p(G) \subseteq \mathcal{C}e_ p(G)$ is a $G$-homotopy equivalence; see \cite[Thm.1.1]{gs}.

\bigskip
\begin{dft}The group $G$ has {\it characteristic} $p$ if $C_G(O_p(G)) \leq O_p(G)$. If all $p$-local subgroups
of $G$ have characteristic $p$ then $G$ has {\it local characteristic} $p$.
\end{dft}

\begin{prop}Assume $G$ has characteristic $p$. Then the following hold:
\begin{enumerate}
\item[(1)]\cite[12.6]{gls2b} $G$ has local characteristic $p$.
\item[(2)]\cite[Lemma 1]{sol74} If $P$ is a $p$-subgroup of $G$ and $H$ a subgroup of $G$ with $PC_G(P) \leq H \leq N_G(P)$, then $H$ has characteristic $p$.
\end{enumerate}
\end{prop}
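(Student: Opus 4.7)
The first move is to deduce Part (1) from Part (2). Every $p$-local subgroup of $G$ has the form $H = N_G(Q)$ for some nontrivial $p$-subgroup $Q$, and with the choice $P = Q$ the hypothesis $QC_G(Q) \leq N_G(Q) = H$ holds automatically; Part (2) then yields that $H$ has characteristic $p$. So the substance lies in Part (2).

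Set $T := O_p(G)$ and $R := O_p(H)$. Since $P$ is normal in $H \leq N_G(P)$, we have $P \leq R$. The goal is $C_H(R) \leq R$. Because $R \trianglelefteq H$ forces $C_H(R) \trianglelefteq H$, it suffices to show that $C_H(R)$ is a $p$-group, for then $R \cdot C_H(R)$ is a normal $p$-subgroup of $H$ containing $R$, hence equal to $R$. Accordingly, the task reduces to ruling out nontrivial $p'$-elements of $C_H(R)$.

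The crucial intermediate claim is $C_T(R) \leq R$. To prove it, any $t \in C_T(R)$ centralizes $P \leq R$, so $t \in C_G(P) \leq H$ by hypothesis; therefore $t \in H \cap T$, which is a normal $p$-subgroup of $H$ (since $H$ normalizes both $T$ and itself), and hence $t \in R$. Granting this, any $p'$-element $x \in C_H(R)$ centralizes $C_T(R)$. I would then apply Thompson's $A \times B$--Lemma to the commuting pair $A = \langle x \rangle$ (a $p'$-group) and $B = R$ (a $p$-group), acting by conjugation on the $p$-group $V = T$ (which is $G$--, hence $AB$--invariant). The hypothesis that $A$ centralizes $C_V(B) = C_T(R)$ is exactly what we have just verified, so the lemma concludes that $A$ acts trivially on $T$, i.e.\ $x \in C_G(T)$. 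The global hypothesis $C_G(T) \leq T$ then places the $p'$-element $x$ inside the $p$-group $T$, forcing $x = 1$.

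The main obstacle is the organizational one of recognizing how the two ingredients in the hypothesis combine. The local containment $PC_G(P) \leq H$ is used precisely to pull $C_T(R)$ into $H \cap T \leq R$, while the characteristic--$p$ assumption $C_G(O_p(G)) \leq O_p(G)$ supplies the concluding contradiction once Thompson's lemma has promoted centralization of $C_T(R)$ to centralization of all of $T$. Once one sees this alignment the proof is short; the challenge is spotting that the two hypotheses are each being used in a very specific place.
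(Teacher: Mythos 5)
Your proof is correct. Note that the paper itself gives no argument for this proposition; it is quoted as a known result, with part (1) cited to Gorenstein--Lyons--Solomon and part (2) to Solomon's Lemma~1, so there is no in-paper proof to compare against. Your argument is essentially the standard one underlying those citations: the deduction of (1) from (2) by taking $P=Q$ for a $p$-local subgroup $N_G(Q)$ is exactly right, and in (2) each step checks out --- $P\leq O_p(H)=R$ since $H\leq N_G(P)$; the reduction to showing $C_H(R)$ has no nontrivial $p'$-elements, since $C_H(R)\trianglelefteq H$ would then be a normal $p$-subgroup and hence lie in $R$; the key containment $C_T(R)\leq H\cap T\leq O_p(H)=R$, which is precisely where the hypothesis $C_G(P)\leq H$ enters; and the application of Thompson's $A\times B$ lemma with $A=\langle x\rangle$, $B=R$, $V=T=O_p(G)$ to force $x\in C_G(T)\leq T$ and hence $x=1$. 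The only external input is the $A\times B$ lemma itself, which is standard and correctly invoked (the commuting hypothesis $[A,B]=1$ and the condition that $A$ centralizes $C_V(B)$ are both verified). So the proposal is a complete, self-contained proof of the cited result.
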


\begin{prop}
Let $G$ be a finite group.
\begin{enumerate}\item[(1)] Let $P \leq G$ be a $p$-subgroup. Then $C_G(P)$ has characteristic $p$ if and only if $N_G(P)$ has characteristic $p$.
\item[(2)]\cite{sol74} Let $Q$ be a $p$-subgroup of a finite group $G$, with $C_G(Q)$ of characteristic $p$. Let $P$ be a $p$-subgroup of $G$ containing $Q$. Then $C_G(P)$ has characteristic $p$.
\end{enumerate}
\end{prop}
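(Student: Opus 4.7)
The plan is to establish (1) first and then invoke it inside the proof of (2).

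\medskip

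For (1), direction $(\Leftarrow)$ rests on the fact that a normal subgroup $N$ of a characteristic $p$ group $K$ itself has characteristic $p$. To see this, $O_{p'}(N)$ is characteristic in $N$ hence normal in $K$; but $K$ of characteristic $p$ satisfies $O_{p'}(K)\leq C_K(O_p(K))\leq O_p(K)$, so $O_{p'}(K)=1$ and therefore $O_{p'}(N)=1$. Similarly every component of $N$ is a quasisimple subnormal subgroup of $K$, hence a component of $K$, but a characteristic $p$ group has no components. Thus $F^{\ast}(N)=O_p(N)$, i.e., $N$ has characteristic $p$. Applying this to $C_G(P)\trianglelefteq N_G(P)$ yields the direction. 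For $(\Rightarrow)$, set $R=O_p(N_G(P))$ and $\bar P=O_p(C_G(P))$. Both $P$ and $\bar P$ are normal $p$-subgroups of $N_G(P)$ (the latter because $\bar P$ is characteristic in $C_G(P)\trianglelefteq N_G(P)$), so $P,\bar P\leq R$. For $g\in C_{N_G(P)}(R)$, $g$ centralizes $P\leq R$ so $g\in C_G(P)$, and $g$ centralizes $\bar P\leq R$, so the characteristic $p$ hypothesis on $C_G(P)$ forces $g\in \bar P\leq R$. Hence $C_{N_G(P)}(R)\leq R$.

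\medskip

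For (2), first reduce to the case $Q\trianglelefteq P$. Since $P$ is a $p$-group and $Q<P$ implies $Q<N_P(Q)$, we may choose a chain $Q=Q_0\trianglelefteq Q_1\trianglelefteq \cdots \trianglelefteq Q_n=P$ with $[Q_{i+1}:Q_i]=p$ and apply the main lemma inductively to the pairs $(Q_i,Q_{i+1})$. So assume $Q\trianglelefteq P$ and set $H=C_G(Q)$; by hypothesis $H$ has characteristic $p$, and $P$ normalizes $H$. The product $L=PH\leq G$ then satisfies $H\trianglelefteq L$ with $L/H\cong P/(P\cap H)$ a $p$-group. The main obstacle is to show that $L$ itself has characteristic $p$. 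Since $O_p(H)$ is characteristic in $H$ and $H\trianglelefteq L$, we have $O_p(H)\leq O_p(L)$, while $O_p(L)\cap H\trianglelefteq H$ is a $p$-group and so lies in $O_p(H)$, giving $O_p(L)\cap H=O_p(H)$. Any element of $C_L(O_p(L))\cap H$ therefore centralizes $O_p(H)$ and lies in $O_p(H)\leq O_p(L)$ by characteristic $p$ of $H$. Since $L/H$ is a $p$-group, the image of $C_L(O_p(L))$ in $L/H$ is a $p$-group too, forcing $C_L(O_p(L))$ itself to be a $p$-group; being normal in $L$, it is contained in $O_p(L)$, so $L$ has characteristic $p$.

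\medskip

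Finally, $C_G(P)\leq C_G(Q)=H\leq L$, so $C_G(P)=C_L(P)$. Proposition~2.4(1) applied to the characteristic $p$ group $L$ implies that the $p$-local subgroup $N_L(P)$ has characteristic $p$, and then part (1) already established gives that $C_L(P)=C_G(P)$ has characteristic $p$.
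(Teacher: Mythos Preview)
The paper does not actually supply a proof of this proposition: part~(2) is simply attributed to Solomon, and part~(1) is stated without argument or reference. So there is no ``paper proof'' to compare against; what matters is whether your self-contained argument is sound, and it is.

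For part~(1), both directions are correct. The $(\Leftarrow)$ direction via the generalized Fitting subgroup---showing that a normal subgroup of a characteristic~$p$ group again has characteristic~$p$ by killing $O_{p'}$ and the components---is standard and efficient. The $(\Rightarrow)$ direction is a clean direct computation: the key observation that both $P$ and $O_p(C_G(P))$ sit inside $O_p(N_G(P))$ is exactly what is needed.

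For part~(2), the reduction to $Q\trianglelefteq P$ via a subnormal chain in the $p$-group $P$ is fine. The heart of the argument---showing that $L=P\,C_G(Q)$ has characteristic~$p$---is correctly carried out: you identify $O_p(L)\cap H=O_p(H)$, deduce $C_L(O_p(L))\cap H\leq O_p(H)$ from the hypothesis on $H$, and then use that $L/H$ is a $p$-group to force $C_L(O_p(L))$ to be a $p$-group. The final step, invoking Proposition~2.4(1) for $L$ and then your part~(1), is legitimate; you could equally well have applied Proposition~2.4(2) directly to $PC_L(P)$ inside $L$ and then used your normal-subgroup lemma, but this is only a cosmetic variation. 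In effect you have reconstructed Solomon's lemma.
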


\begin{dft}A {\it parabolic subgroup} of $G$ is defined to be a subgroup which contains a Sylow $p$-subgroup of $G$. The group $G$ has {\it parabolic characteristic} $p$ if all $p$-local, parabolic subgroups of $G$ have characteristic $p$.
\end{dft}

\begin{rem} Examples of groups of local characteristic $p$ are the groups of Lie type defined over fields of characteristic $p$, some of the sporadic groups (such as $M_{22}, M_{24}, Co_2$ for $p=2$, $M_{11}$, $McL$ for $p=3$, $McL$, $Ly$ for $p=5$), and any groups with self-centralizing Sylow $p$-subgroup of order $p$, such as Alt$(p)$.
\end{rem}

\begin{rem}
Any group of local characteristic $p$ has parabolic characteristic $p$. Some examples of sporadic groups of parabolic characteristic $p$ are: $M_{12}, J_2, Co_1$ for $p=2$, $M_{12}, J_3, Co_1$ for $p=3$, $J_2, Co_1, Co_2$ for $p=5$.\\
\end{rem}

\section{Distinguished collections of $p$-subgroups}

An element $x$ of order $p$ in $G$ is {\it $p$-central} if $x$ is in the center of a Sylow $p$-subgroup of $G$. Let $\Gamma _p(G)$ denote the family of $p$-central elements of $G$. For a $p$-subgroup $P$ of $G$ define:
$$\widehat{P} = \langle x \; | \; x \in \Omega _1 Z(P) \cap \Gamma_p(G) \rangle$$
Further, for $\mathcal{C}_p(G)$ a collection of $p$-subgroups of $G$ denote:
$$\widehat{\mathcal{C}}_p(G) = \lbrace P \in \mathcal{C}_p(G) \; | \; \widehat{P} \not= 1\; \rbrace$$
the collection of subgroups in $\mathcal{C}_p(G)$ which contain $p$-central elements in their centers.
We call $\widehat{\mathcal{C}}_p(G)$ the {\it distinguished}
$\mathcal{C}_p(G)$ {\it collection}. We shall refer to the subgroups in
$\widehat{\mathcal{C}}_p(G)$ as {\it distinguished subgroups}. Also, denote
$$\widetilde{\mathcal{C}}_p(G) = \lbrace P \in \mathcal{C}_p(G) \; | \; P \cap \Gamma_p(G) \not= 1\; \rbrace$$
the collection of subgroups in $\mathcal{C}_p(G)$ which contain a $p$-central element.
Obviously $\widehat{\mathcal{C}}_p(G) \subseteq \widetilde{\mathcal{C}}_p(G) \subseteq \mathcal{C}_p(G)$.

\begin{prop} All $p$-centric subgroups of $G$ are distinguished,
that is $\mathcal{C}e_p(G) \subseteq \widehat{\mathcal{S}}_p(G)$. Consequently the collection of distinguished $p$-radical subgroups
contains the collection of $p$-centric and $p$-radical subgroups:
$\mathcal{B}_p^{\text{cen}}(G) \subseteq \widehat{\mathcal{B}}_p(G)$.
\end{prop}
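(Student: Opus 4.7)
The plan is to exhibit, for any $p$-centric subgroup $R$, an explicit $p$-central element lying in $Z(R)$, which forces $\widehat{R}\neq 1$ and hence $R\in\widehat{\mathcal{S}}_p(G)$.

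First, let $R\in\mathcal{C}e_p(G)$ and embed $R$ in some Sylow $p$-subgroup $S$ of $G$. Since $Z(S)$ centralizes every element of $S$, and $R\leq S$, we get $Z(S)\leq C_G(R)$. Now I want to exploit the structural description recalled in Section~2: because $R$ is $p$-centric, $C_G(R) = Z(R)\times H$ with $|H|$ coprime to $p$. Consequently every $p$-subgroup of $C_G(R)$ sits inside the $p$-part $Z(R)$ of this direct product, which gives $Z(S)\leq Z(R)$.

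Next, pick any $z\in \Omega_1 Z(S)$ of order $p$ (which exists since $S$ is a nontrivial $p$-group, hence so is $Z(S)$). By construction $z$ lies in the center of the Sylow $p$-subgroup $S$, so $z\in \Gamma_p(G)$; and by the inclusion just proved, $z\in Z(R)$, hence $z\in \Omega_1 Z(R)\cap \Gamma_p(G)$. Therefore $\widehat{R}\neq 1$, which is precisely the condition for $R$ to be distinguished, establishing $\mathcal{C}e_p(G)\subseteq \widehat{\mathcal{S}}_p(G)$.

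For the consequence, if $R\in \mathcal{B}_p^{\text{cen}}(G)=\mathcal{C}e_p(G)\cap\mathcal{B}_p(G)$, then $R$ is $p$-radical and (by what we just showed) distinguished, so $R\in \mathcal{B}_p(G)\cap \widehat{\mathcal{S}}_p(G)=\widehat{\mathcal{B}}_p(G)$. No real obstacle is expected here; the only point that requires attention is the justification that every $p$-subgroup of $Z(R)\times H$ lies in $Z(R)$ when $\gcd(|H|,p)=1$, which is an immediate consequence of projecting onto the two direct factors and observing that the $H$-component of a $p$-element must be trivial.
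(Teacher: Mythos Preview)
Your proof is correct and follows essentially the same approach as the paper: embed the $p$-centric subgroup in a Sylow $p$-subgroup $S$, observe $Z(S)\leq C_G(R)$, and use $p$-centricity to conclude $Z(S)\leq Z(R)$. You simply spell out in more detail the passage from $Z(S)\leq C_G(R)$ to $Z(S)\leq Z(R)$ (via the decomposition $C_G(R)=Z(R)\times H$) and the final verification that $\widehat{R}\neq 1$, both of which the paper leaves implicit.
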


\begin{proof} Let $P$ be a centric $p$-subgroup of $G$ and let $S$ be any Sylow
$p$-subgroup of $G$ which contains $P$. Then $Z(S) \leq C_G(P)$ and $Z(S) \leq Z(P)$.
\end{proof}

\begin{rem} If $R$ is a $p$-subgroup of $G$ and if $N_G(R)$
contains a Sylow $p$-subgroup of $G$, then $R$ is distinguished. This is
easy to see since in this case $R \triangleleft S \in
\text{Syl}_p(N_G(R)) \subseteq \text{Syl}_p(G)$. Thus $R \cap Z(S) \not= 1$.
\end{rem}

\begin{lem} Let $P \in \mathcal{S}_p(G)$ and assume that $N_G(P)$ has
characteristic $p$. Then $C_G(O_p(N_G(P)))=Z(O_p(N_G(P)))$, and thus $O_p(N_G(P))$
is $p$-centric and distinguished.
\end{lem}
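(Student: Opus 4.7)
The plan is to set $N := N_G(P)$ and $Q := O_p(N)$, and to establish the chain $Z(Q) \leq C_G(Q) \leq Q$. The first inclusion is tautological; once the second is in hand, any element of $C_G(Q)$ both lies in $Q$ and centralizes all of $Q$, hence lies in $Z(Q)$, so $C_G(Q) = Z(Q)$ and the main identity is proved. From this the remaining two assertions will come essentially for free.

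The pivotal preliminary observation is that $P \leq Q$. This is immediate: $P$ is normal in $N_G(P)$ by definition, hence is a normal $p$-subgroup of $N$, hence is contained in $O_p(N) = Q$. This tiny remark drives the whole argument, because any $g \in C_G(Q)$ then centralizes $P$, hence normalizes $P$, and so lies in $N$. Therefore $C_G(Q) \leq N$, which collapses the $G$-centralizer of $Q$ to the $N$-centralizer: $C_G(Q) = C_N(Q)$. At this point the hypothesis that $N$ has characteristic $p$ enters directly: by definition $C_N(O_p(N)) \leq O_p(N)$, i.e., $C_N(Q) \leq Q$. Concatenating the two steps gives $C_G(Q) \leq Q$, and then $C_G(Q) \leq Z(Q)$ as explained above.

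The two corollaries now drop out. Since $C_G(Q) = Z(Q)$ is itself a $p$-group, $Z(Q)$ is trivially a Sylow $p$-subgroup of $C_G(Q)$ (the complementary $p'$-factor $H$ appearing in the definition of $p$-centric is forced to be trivial), so $Q$ is $p$-centric. That $Q$ is distinguished is then exactly the content of Proposition~3.1, which embeds $\mathcal{C}e_p(G)$ in $\widehat{\mathcal{S}}_p(G)$. There is no genuine obstacle in this argument; if one had to single out a step it is the trivial inclusion $P \leq Q$, since that is precisely what lets us transport the centralizer inside $N_G(P)$ and thereby activate the characteristic $p$ hypothesis. Everything else is bookkeeping.
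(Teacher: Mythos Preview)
Your proof is correct and follows essentially the same route as the paper's: use $P \leq O_p(N_G(P))$ to get $C_G(O_p(N_G(P))) \leq C_G(P) \leq N_G(P)$, then invoke the characteristic $p$ hypothesis to force $C_G(O_p(N_G(P)))$ into $O_p(N_G(P))$, and finish with Proposition~3.1. The only difference is presentation, not substance.
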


\begin{proof} Since $P \leq O_p(N_G(P))$, we have $C_G(O_p(N_G(P))) \leq
C_G(P) \leq N_G(P)$.  Thus $C_G(O_p(N_G(P)))=C_{N_G(P)}(O_p(N_G(P))) \leq
O_p(N_G(P))$, and so $C_G(O_p(N_G(P)))=Z(O_p(N_G(P)))$.  Clearly $O_p(N_G(P))$ is
$p$-centric, and Proposition 3.1 implies this group is distinguished.
\end{proof}

\begin{prop} Let $G$ have local characteristic $p$. Then $\mathcal{B}_p^{\text{cen}}(G)=\mathcal{B}_p(G)$.
\end{prop}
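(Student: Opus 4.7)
The inclusion $\mathcal{B}_p^{\text{cen}}(G) \subseteq \mathcal{B}_p(G)$ holds by definition, so the content of the statement is the reverse inclusion: every nontrivial $p$-radical subgroup of a group of local characteristic $p$ is automatically $p$-centric. My plan is to deduce this directly from Lemma 3.3 combined with the defining property of $p$-radical subgroups.

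Concretely, I would take an arbitrary $R \in \mathcal{B}_p(G)$, so that $R$ is a nontrivial $p$-subgroup satisfying $R = O_p(N_G(R))$. Because $R \neq 1$, the normalizer $N_G(R)$ is a $p$-local subgroup of $G$, and the hypothesis that $G$ has local characteristic $p$ gives that $N_G(R)$ has characteristic $p$. Lemma 3.3, applied with $P = R$, then yields
\[
C_G\bigl(O_p(N_G(R))\bigr) = Z\bigl(O_p(N_G(R))\bigr).
\]
Using $O_p(N_G(R)) = R$, this collapses to $C_G(R) = Z(R)$. In particular $C_G(R)$ is a $p$-group and $Z(R)$ is trivially a Sylow $p$-subgroup of $C_G(R)$, so $R$ is $p$-centric. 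Hence $R \in \mathcal{B}_p^{\text{cen}}(G)$, completing the reverse inclusion.

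There is no real obstacle here: the whole argument is a one-line application of Lemma 3.3 once one observes that $N_G(R)$ has characteristic $p$ by hypothesis and that the $p$-radicality condition $R = O_p(N_G(R))$ lets us identify the subgroup appearing in the conclusion of Lemma 3.3 with $R$ itself. The only point worth being explicit about in the write-up is the invocation of local (rather than merely parabolic) characteristic $p$, since we need characteristic $p$ for $N_G(R)$ for every $R \in \mathcal{B}_p(G)$, not only for those whose normalizers contain a Sylow $p$-subgroup.
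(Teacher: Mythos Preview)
Your proposal is correct and follows essentially the same route as the paper: take $R \in \mathcal{B}_p(G)$, use local characteristic $p$ to get that $N_G(R)$ has characteristic $p$, and deduce $C_G(R)=Z(R)$ from $R=O_p(N_G(R))$. The only cosmetic difference is that you cite Lemma~3.3 explicitly, whereas the paper compresses that step into one line.
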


\begin{proof} Let $R \in \mathcal{B}_p(G)$, so that $R=O_p(N_G(R))$.
Since $N_G(R)$ has characteristic $p$, $C_G(R)=Z(R)$ and $R \in \mathcal{B}_p^{\text{cen}}(G)$.
\end{proof}

\begin{prop} Let $G$ have parabolic characteristic $p$. Then:
\begin{itemize}
\item[(a)] If $P \in \widetilde{\mathcal{S}}_p(G)$ then $N_G(P)$ has characteristic $p$.
\item[(b)] $\mathcal{B}^{\text{cen}}_p(G) = \widehat{\mathcal{B}}_p(G) = \widetilde{\mathcal{B}}_p(G)$.
\end{itemize}
\end{prop}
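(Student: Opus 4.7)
\bigskip

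The plan is to handle (a) first and then deduce (b) as a fairly direct consequence using Lemma 3.3 together with Proposition 3.1.

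For part (a), let $P \in \widetilde{\mathcal{S}}_p(G)$ and pick a $p$-central element $x \in P$, so that $x \in Z(S)$ for some Sylow $p$-subgroup $S$ of $G$. Since $x$ is central in $S$, the centralizer $C_G(x)$ contains $S$, hence is a parabolic subgroup of $G$; it is also $p$-local because it is the centralizer of the nontrivial $p$-subgroup $\langle x \rangle$. The parabolic characteristic $p$ hypothesis therefore gives that $C_G(\langle x \rangle)$ has characteristic $p$. Now apply Proposition 2.5(2) with $Q = \langle x \rangle$ and the containing $p$-subgroup $P$: this yields that $C_G(P)$ has characteristic $p$. Finally, Proposition 2.5(1) upgrades this to the statement that $N_G(P)$ has characteristic $p$.

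For part (b), two of the three inclusions are immediate. Proposition 3.1 gives $\mathcal{B}^{\text{cen}}_p(G) \subseteq \widehat{\mathcal{B}}_p(G)$, and $\widehat{\mathcal{B}}_p(G) \subseteq \widetilde{\mathcal{B}}_p(G)$ is a direct consequence of the definitions (a $p$-central element in $Z(P)$ is in particular a $p$-central element in $P$). It remains to show the reverse inclusion $\widetilde{\mathcal{B}}_p(G) \subseteq \mathcal{B}^{\text{cen}}_p(G)$. Let $R \in \widetilde{\mathcal{B}}_p(G)$, so $R$ is $p$-radical and contains a $p$-central element, i.e.\ $R \in \widetilde{\mathcal{S}}_p(G)$. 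By part (a), $N_G(R)$ has characteristic $p$. Lemma 3.3 (applied to $P = R$) then tells us that $O_p(N_G(R))$ is $p$-centric; but $R = O_p(N_G(R))$ by the $p$-radical hypothesis, so $R$ itself is $p$-centric, placing $R$ in $\mathcal{B}^{\text{cen}}_p(G)$.

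No serious obstacle is expected: the statement is essentially a packaging result. The only conceptual step is the observation in (a) that a $p$-central element forces its centralizer to be parabolic, so that the parabolic characteristic $p$ hypothesis can be turned into an honest characteristic $p$ statement on $C_G(x)$; after that, Propositions 2.5(1)--(2) propagate the property to $C_G(P)$ and $N_G(P)$, and Lemma 3.3 closes the loop to show radical + distinguished forces centric.
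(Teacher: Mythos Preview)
Your proof is correct and follows essentially the same line as the paper's. One small quibble on terminology: a ``$p$-local subgroup'' is standardly the \emph{normalizer} of a nontrivial $p$-subgroup, so the paper applies the parabolic hypothesis to $N_G(\langle z\rangle)$ and then uses Proposition~2.5(1) to pass to $C_G(z)$, rather than calling $C_G(x)$ itself $p$-local; this is a cosmetic difference, since $N_G(\langle x\rangle)\supseteq C_G(x)\supseteq S$ is just as parabolic.
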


\begin{proof} (a) Let $z \in P$ be a $p$-central element, so that $N_G(\langle z \rangle)$
contains a Sylow $p$-subgroup of $G$.  Thus $N_G(\langle z \rangle)$ has characteristic $p$.
By Proposition $2.5(1)$, $C_G(z)$ has characteristic $p$. By Proposition $2.5(2)$, $C_G(P)$ has
characteristic $p$, and another application of Proposition $2.5(1)$ shows $N_G(P)$ has characteristic $p$.

\smallskip
(b) Note that $\mathcal{B}^{\text{cen}}_p(G) \subseteq \widehat{\mathcal{B}}_p(G) \subseteq \widetilde{\mathcal{B}}_p(G)$. Let $R \in \widetilde{\mathcal{B}}_p(G)$. Then $N_G(R)$ has
characteristic $p$, which implies that $C_G(R) = Z(R)$. Thus $R \in \mathcal{B}^{\text{cen}}_p(G)$.
\end{proof}

\begin{rem} It follows from the above Proposition that if $G$ has parabolic characteristic $p$ and $V \in \mathcal{B}_p(G) \setminus \widehat{\mathcal{B}}_p(G)$, then $V$ does not contain any $p$-central elements.
\end{rem}

\begin{prop} If $G$ has parabolic characteristic $p$, then the collections $\widehat{\mathcal{B}}_p(G), \widehat{\mathcal{A}}_p(G)$ and $\widehat{\mathcal{S}}_p(G)$ are $G$-homotopy equivalent.
\end{prop}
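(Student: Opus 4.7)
The plan is to split the three-way equivalence into the two inclusions $\widehat{\mathcal{A}}_p(G)\hookrightarrow\widehat{\mathcal{S}}_p(G)$ and $\widehat{\mathcal{B}}_p(G)\hookrightarrow\widehat{\mathcal{S}}_p(G)$, handled respectively by Theorem 2.2(2) and by Theorem 2.2(1); for the Bouc-type inclusion I route the argument through the auxiliary collection $\widetilde{\mathcal{S}}_p(G)$, which is closed under $p$-overgroups even though $\widehat{\mathcal{S}}_p(G)$ itself is not.

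For $\widehat{\mathcal{A}}_p(G)\simeq_G\widehat{\mathcal{S}}_p(G)$, Theorem 2.2(2) reduces me to showing that $\widehat{\mathcal{A}}_p(G)_{\leq P}$ is $N_G(P)$-contractible for every $P\in\widehat{\mathcal{S}}_p(G)$. The key object is $\widehat{P}\leq\Omega_1 Z(P)$: it is nontrivial because $P$ is distinguished, elementary abelian, distinguished itself, and $N_G(P)$-invariant since $\Gamma_p(G)$ is closed under $G$-conjugation. As $\widehat{P}$ is central in $P$, for any $V\in\widehat{\mathcal{A}}_p(G)_{\leq P}$ the product $V\widehat{P}$ is again elementary abelian, contained in $P$, and distinguished; the $N_G(P)$-equivariant zigzag $V\leq V\widehat{P}\geq\widehat{P}$ then conically contracts $\widehat{\mathcal{A}}_p(G)_{\leq P}$ onto $\widehat{P}$.

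For $\widehat{\mathcal{B}}_p(G)\simeq_G\widehat{\mathcal{S}}_p(G)$ I use the chain $\widehat{\mathcal{B}}_p(G)=\widetilde{\mathcal{B}}_p(G)\hookrightarrow\widetilde{\mathcal{S}}_p(G)\hookleftarrow\widehat{\mathcal{S}}_p(G)$, with the equality being Proposition 3.5(b). Both inclusions will be $G$-homotopy equivalences by a uniform application of Theorem 2.2(1): given $P\in\widetilde{\mathcal{S}}_p(G)$ lying outside the smaller collection, Proposition 3.5(a) makes $N_G(P)$ of characteristic $p$, Lemma 3.3 then makes $R_P:=O_p(N_G(P))$ $p$-centric and distinguished, and $R_P>P$ in both cases (if $P\notin\widetilde{\mathcal{B}}_p(G)$ this is non-radicality of $P$; if $P\notin\widehat{\mathcal{S}}_p(G)$ then $R_P=P$ would force $P$ distinguished, a contradiction). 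The Th\'evenaz--Webb conical zigzag $Q\geq N_Q(P)\leq N_Q(P)R_P\geq R_P$ then contracts $\widetilde{\mathcal{S}}_p(G)_{>P}$ onto $R_P$ $N_G(P)$-equivariantly; every term of the zigzag automatically lies in $\widetilde{\mathcal{S}}_p(G)_{>P}$ because it contains $P$, using that $N_Q(P)>P$ by normalizer growth in the $p$-group $Q$ and that $N_Q(P)R_P$ is a $p$-group since $N_Q(P)\leq N_G(P)\leq N_G(R_P)$.

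The main technical point shaping this plan is that $\widehat{\mathcal{S}}_p(G)$ is \emph{not} closed under $p$-overgroups: $p$-centrality in $Z(P)$ need not persist when $P$ is enlarged. A direct Bouc-type zigzag inside $\widehat{\mathcal{S}}_p(G)$ would have to verify distinguishedness of each intermediate $p$-subgroup, which is not automatic; routing through $\widetilde{\mathcal{S}}_p(G)$ removes this difficulty, and transitivity of $G$-homotopy equivalence then delivers the claimed three-way equivalence.
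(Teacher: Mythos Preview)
Your argument is correct. The Quillen step ($\widehat{\mathcal{A}}_p(G)\hookrightarrow\widehat{\mathcal{S}}_p(G)$) is identical to the paper's. For the Bouc step the paper takes a different route: it works \emph{directly} inside $\widehat{\mathcal{S}}_p(G)$, applying Theorem~2.2(1) to $\widehat{\mathcal{B}}_p(G)\hookrightarrow\widehat{\mathcal{S}}_p(G)$ and verifying distinguishedness of each term of the zigzag $Q\geq N_Q(P)\leq N_Q(P)\,O_{NP}\geq O_{NP}$ by hand --- $N_Q(P)$ is distinguished because $Z(Q)\leq Z(N_Q(P))$, while $O_{NP}$ and $N_Q(P)\,O_{NP}$ are $p$-centric (Lemma~3.3 and closure of centricity under $p$-overgroups) and hence distinguished by Proposition~3.1. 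So the verification you worried about is not difficult, and the detour through $\widetilde{\mathcal{S}}_p(G)$ is not forced. On the other hand, your detour buys something: the second inclusion $\widehat{\mathcal{S}}_p(G)\hookrightarrow\widetilde{\mathcal{S}}_p(G)$ you establish is precisely Proposition~3.8, which the paper proves separately immediately afterward (via Theorem~2.2(2) and a zigzag through the radical closure rather than $O_p(N_G(P))$), so in effect you have folded two propositions into one argument.
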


\begin{proof}
We first show that the inclusion map $\widehat{\mathcal{A}}_p(G) \hookrightarrow \widehat{\mathcal{S}}_p(G)$ is a $G$-homotopy equivalence. We attain this result by showing that $\widehat{\mathcal{A}}_p(G)_{\leq P}$ is $N_G(P)$-contractible for any $P \in \widehat{\mathcal{S}}_p(G)$ and then applying Theorem $2.2(2)$. If $Q \in \widehat{\mathcal{A}}_p(G)_{\leq P}$ then $Q \widehat{P} \in \widehat{\mathcal{A}}_p(G)_{\leq P}$. We obtain the following contracting homotopy:
$Q \leq Q \widehat{P} \geq \widehat{P}$. The $N_G(P)$-contractibility follows from the fact that the two inequalities correspond to poset maps which are $N_G(P)$-equivariant.

\medskip
To show that $\widehat{B}_p(G)$ is $G$-homotopy equivalent to $\widehat{S}_p(G)$, we will use Theorem $2.2(1)$. Thus we have to prove that for each $P \in \widehat{\mathcal{S}}_p(G) \setminus
\widehat{\mathcal{B}}_p(G)$, the subcollection $\widehat{\mathcal{S}}_p(G)_{>P}$ is $N_G(P)$-contractible.

\medskip
Let $Q \in \widehat{\mathcal{S}}_p(G)_{>P}$, so $P< N_Q(P) \leq N_G(P)$.
The subgroup $N_Q(P)$ is distinguished since it contains $Z(Q)$.
As $P$ is a distinguished $p$-subgroup, $N_G(P)$ has characteristic $p$. Denote $O_{NP} = O_p(N_G(P))$ and observe that $P < O_{NP}$. By Lemma 3.3, $O_{NP}$ is $p$-centric and distinguished.  Since the collection of $p$-centric
subgroups is closed under passage to $p$-overgroups, $N_Q(P) O_{NP}$ is also $p$-centric and
thus it is distinguished.
Now consider the string of poset maps $\widehat{\mathcal{S}}_p(G)_{>P} \rightarrow
\widehat{\mathcal{S}}_p(G)_{>P}$ given by:
$$Q \geq N_Q(P) \leq N_Q(P) O_{NP} \geq O_{NP}$$
which proves the $N_G(P)$-contractibility of $\widehat{\mathcal{S}}_p(G)_{>P}$.
\end{proof}

\begin{prop}Assume that $G$ has parabolic characteristic $p$; then the inclusion $\widehat{\mathcal{S}}_p(G) \hookrightarrow \widetilde{\mathcal{S}}_p(G)$ is a $G$-homotopy equivalence.
\end{prop}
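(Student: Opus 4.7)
The plan is to apply Theorem 2.2(1) to the inclusion $\widehat{\mathcal{S}}_p(G) \hookrightarrow \widetilde{\mathcal{S}}_p(G)$: the claim then reduces to showing that for each $P \in \widetilde{\mathcal{S}}_p(G) \setminus \widehat{\mathcal{S}}_p(G)$ the subcollection $\widetilde{\mathcal{S}}_p(G)_{>P}$ is $N_G(P)$-contractible. Since $P \in \widetilde{\mathcal{S}}_p(G)$, Proposition 3.5(a) gives that $N_G(P)$ has characteristic $p$, and Lemma 3.3 then produces the $p$-centric, distinguished subgroup $O_{NP} := O_p(N_G(P))$. Because $P$ is normal in $N_G(P)$ one has $P \leq O_{NP}$; this inclusion must be strict, since $P = O_{NP}$ would force $P$ to be distinguished (Lemma 3.3 again), contrary to hypothesis. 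In particular $O_{NP} \in \widetilde{\mathcal{S}}_p(G)_{>P}$, and it will serve as the basepoint of the contraction.

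For the contracting homotopy I would use the four-term zigzag
$$Q \;\geq\; N_Q(P) \;\leq\; N_Q(P)\, O_{NP} \;\geq\; O_{NP},$$
exactly as in the proof of Proposition 3.7, read as a chain of $N_G(P)$-equivariant poset endomorphisms of $\widetilde{\mathcal{S}}_p(G)_{>P}$ connecting the identity to the constant map at $O_{NP}$. The standard verifications are that $N_Q(P) > P$ by $p$-group normalizer growth inside $Q$, the product $N_Q(P)\, O_{NP}$ is a $p$-subgroup of $N_G(P)$ because $O_{NP}$ is normal there, and $N_G(P)$-equivariance follows because $N_G(P)$ fixes $P$ under conjugation and normalizes $O_{NP}$.

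The step that genuinely differs from Proposition 3.7, and the only point I expect to require care, is checking that each intermediate term still lies in $\widetilde{\mathcal{S}}_p(G)$. In Proposition 3.7 distinguishedness of $N_Q(P)$ was obtained from $Z(Q) \leq N_Q(P)$ combined with distinguishedness of $Q$, but here $Q$ need not be distinguished. The weaker target condition saves us: $\widetilde{\mathcal{S}}_p(G)$-membership requires only \emph{some} $p$-central element in the group, and $N_Q(P) \supseteq P$ inherits one directly from $P$. The same remark, together with the containment $O_{NP} \leq N_Q(P)\, O_{NP}$, handles the other two intermediate terms. Once these placements are confirmed, the zigzag yields the $N_G(P)$-contractibility of $\widetilde{\mathcal{S}}_p(G)_{>P}$, and therefore the desired $G$-homotopy equivalence.
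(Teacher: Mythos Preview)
Your argument is correct, but it follows a different route from the paper's own proof. The paper applies Theorem~2.2(2) rather than 2.2(1): for \emph{every} $P \in \widetilde{\mathcal{S}}_p(G)$ it shows that $\widehat{\mathcal{S}}_p(G)_{\geq P}$ is $N_G(P)$-contractible, using the radical closure $R_P$ as the basepoint and the zigzag $Q \geq N_Q(P) \leq N_Q(P)R_P \geq R_P$. There the intermediate terms must be shown to be \emph{distinguished} (with $p$-central elements in their centers), which is handled via $Z(Q) \leq Z(N_Q(P))$ for the second term and $p$-centricity of $R_P$ and its overgroups for the last two. Your approach, working in the larger poset $\widetilde{\mathcal{S}}_p(G)_{>P}$ and only for $P$ outside $\widehat{\mathcal{S}}_p(G)$, trades this for the easier verification that each term merely \emph{contains} a $p$-central element, which you get for free from $P$ or from $O_{NP}$; the price is the extra observation that $P \lneq O_{NP}$. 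Both contractions are of the same standard shape, and the choice between $O_{NP}$ and $R_P$ is inessential here since Lemma~3.3 already makes $O_{NP}$ $p$-centric once $N_G(P)$ has characteristic $p$.
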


\begin{proof} We will show that the subcollection $\widehat{\mathcal{S}}_p(G)_{\geq P}$ is $N_G(P)$-contractible for every $P \in \widetilde{\mathcal{S}}_P(G)$ and then apply Theorem $2.2(2)$. Let $Q \in \widehat{\mathcal{S}}_p(G)_{\geq P}$ so $P \leq N_Q(P) \leq Q$ and $N_Q(P)$ is a distinguished $p$-subgroup with $Z(Q) \leq Z(N_Q(P))$. Next consider $R_P$, the radical closure of $P$ defined in Section $2$. Note that $N_Q(P)R_P$ is a subgroup in $N_G(R_P)$ since $N_Q(P) \leq N_G(P) \leq N_G(R_P)$. By Proposition $3.5$, $N_G(R_P)$ has characteristic $p$ and $R_P$ is $p$-centric.
Thus $N_Q(P) R_P$ is also $p$-centric, and both $R_P$ and $N_Q(P) R_P$ are distinguished.
Now consider the string of $N_G(P)$-equivariant poset maps $\widehat{\mathcal{S}}_p(G)_{\geq P} \rightarrow \widehat{\mathcal{S}}_p(G)_{\geq P}$ given by:
$$Q \geq N_Q(P) \leq N_Q(P) R_P \geq R_P$$
which proves the contractibility of the subcollection $\widehat{\mathcal{S}}_p(G)_{\geq P}$.
\end{proof}

\begin{prop} The fixed point set $\widetilde{\mathcal{S}}_p^Z(G)$ is $N_G(Z)$-contractible whenever $Z=\langle z \rangle$ with $z$ a $p$-central element in $G$.
\end{prop}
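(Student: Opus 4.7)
The plan is to exhibit an explicit $N_G(Z)$-equivariant contracting homotopy for $\widetilde{\mathcal{S}}_p^Z(G)$, using $Z$ itself as the contraction point. First I would verify that $Z$ belongs to $\widetilde{\mathcal{S}}_p^Z(G)$: it is a nontrivial $p$-subgroup, it is normalized by itself, and it contains the $p$-central element $z$, so $Z \cap \Gamma_p(G) \neq 1$.

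The key move is the zigzag $Q \leq QZ \geq Z$. For $Q \in \widetilde{\mathcal{S}}_p^Z(G)$, by definition $Z$ normalizes $Q$, so the product $QZ$ is again a $p$-subgroup of $G$. Since $z \in QZ$, the subgroup $QZ$ contains the $p$-central element $z$, and since $Z \leq QZ$ it is normalized by $Z$; hence $QZ \in \widetilde{\mathcal{S}}_p^Z(G)$. Consequently the assignments $F_1 \colon Q \mapsto QZ$ and $F_2 \colon Q \mapsto Z$ define poset endomorphisms of $\widetilde{\mathcal{S}}_p^Z(G)$ satisfying $\mathrm{Id} \leq F_1$ and $F_2 \leq F_1$.

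By the standard order-homotopy criterion (Theorem 2.2(3) applied to each of the two inequalities), $\mathrm{Id}$ and $F_1$ are homotopic and so are $F_2$ and $F_1$; concatenating, $\mathrm{Id}$ is homotopic to the constant map $F_2$, so $\widetilde{\mathcal{S}}_p^Z(G)$ is contractible. For equivariance, any $g \in N_G(Z)$ satisfies $g(QZ)g^{-1} = (gQg^{-1})Z$ and fixes the value $Z$, so both $F_1$ and $F_2$ are $N_G(Z)$-equivariant, which upgrades the contractibility to $N_G(Z)$-contractibility. No real obstacle is expected: the argument is essentially a ``multiply by $Z$, then collapse to $Z$'' maneuver, and the only point needing mild care is checking that $QZ$ remains in $\widetilde{\mathcal{S}}_p$, which is immediate because $z$ itself is $p$-central and lies in $QZ$, with no appeal to parabolic characteristic $p$ needed.
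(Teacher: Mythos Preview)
Your proposal is correct and follows essentially the same approach as the paper: the paper's proof is precisely the zigzag $P \leq PZ \geq Z$ via $N_G(Z)$-equivariant poset maps, after noting that $Z$ and $PZ$ lie in $\widetilde{\mathcal{S}}_p^Z(G)$. Your write-up simply supplies more detail on the equivariance and the membership checks than the paper does.
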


\begin{proof} First note that $Z \in \widetilde{\mathcal{S}}_p^Z(G)$. If $P \in \widetilde{\mathcal{S}}_p^Z(G)$ then $PZ \in \widetilde{\mathcal{S}}_p^Z(G)$ since $Z$ normalizes $PZ$. There is a contracting homotopy $P \leq PZ \geq Z$ via $N_G(Z)$-equivariant maps.
\end{proof}

\begin{rem} Since $\widehat{\mathcal{S}}_p(G), \widehat{\mathcal{A}}_p(G), \widehat{\mathcal{B}}_p(G)$ and $\widetilde{\mathcal{S}}_p(G)$ are $G$-homotopy equivalent, for $G$ of parabolic characteristic $p$, $\widehat{\mathcal{S}}_p^Z(G)$, $\widehat{\mathcal{A}}_p^Z(G)$ and $\widehat{\mathcal{B}}_p^Z(G)$ are $N_G(Z)$-contractible as well.\\
\end{rem}

\section{Fixed point sets for noncentral elements}

We shall investigate the fixed point set of an element of order $p$ of noncentral type; these are elements of order $p$ in $G$ which are not conjugate to any element in the center of a Sylow $p$-subgroup of $G$. Under certain hypotheses, we will prove that the fixed point set is equivariantly homotopy equivalent to the complex for a quotient of the centralizer. This will require a combination of nine homotopy equivalences.

\begin{ntn}
Throughout this section, $T$ will be a subgroup of order $p$ of noncentral type in $G$. We will use the
shorthand notation $C=C_G(T)$ and $O_C=O_p(C)$.  The quotient group will be denoted
$\overline{C} = C/O_C$; the quotient map is $q : C \rightarrow \overline{C}$.  For $H \leq C$,
let $\overline{H} = q(H)$.  For $Q \leq C$, denote $O_Q = O_p(N_C(Q))$;
for $\overline{Q} \leq \overline{C}$, denote $O_{\overline{Q}} = O_p(N_{\overline{C}}(\overline{Q}))$.
Let $S_T \in Syl_p(C)$, and extend it to $S \in Syl_p(G)$.  Since $T \leq S_T \leq S$, we have $Z(S) \leq C$;
thus $Z(S) \leq S_T$ and in fact $Z(S) \leq Z(S_T)$. Note that $\overline{S_T} = q(S_T) \in Syl_p(\overline{C})$.
\end{ntn}

\begin{rem}
The proof of our main result (Theorem $4.12$) will require the following hypotheses:
\begin{enumerate}
\item $G$ is a finite group of parabolic characteristic $p$;
\item $C = C_G(T)$
does not have characteristic $p$;
\item The quotient group $\overline{C} = C_G(T)/O_p(C_G(T))$ has parabolic characteristic $p$.
\end{enumerate}
\end{rem}

We first recall a result which is due to Grodal \cite[pp. 420-421]{gr02}, see also
Sawabe \cite[Thm.1]{sa06}. For completeness we provide a proof.

\begin{prop}(in the proof of \cite[Thm.1.1]{gr02}) Let $\mathcal{C}$ be a collection of nontrivial $p$-subgroups of $G$, which is closed under passage to $p$-overgroups. Let $Q \in \mathcal{S}_p(G)$. If $\mathcal{C}'$ is a collection satisfying: $\mathcal{C} \cap \mathcal{B}_p(G) \subseteq \mathcal{C}' \subseteq \mathcal{C}$ then $\mathcal{C}_{>Q}$ is $N_G(Q)$-homotopy equivalent to $\mathcal{C}'_{>Q}$.
\end{prop}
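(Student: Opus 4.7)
The plan is to invoke Theorem 2.2(1), but with the ambient group taken to be $N_G(Q)$ rather than $G$; both $\mathcal{C}_{>Q}$ and $\mathcal{C}'_{>Q}$ are $N_G(Q)$-stable collections. To establish the desired homotopy equivalence it then suffices to verify that for every $P \in \mathcal{C}_{>Q} \setminus \mathcal{C}'_{>Q}$ the subcollection $(\mathcal{C}_{>Q})_{>P}$ is $N_{N_G(Q)}(P)$-contractible. Since $P > Q$, this subcollection coincides with $\mathcal{C}_{>P}$, which will simplify the notation.

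Fix such a $P$. Because $\mathcal{C} \cap \mathcal{B}_p(G) \subseteq \mathcal{C}'$ while $P \in \mathcal{C} \setminus \mathcal{C}'$, the subgroup $P$ cannot be $p$-radical, so $R := O_p(N_G(P))$ strictly contains $P$. The hypothesis that $\mathcal{C}$ is closed under $p$-overgroups then guarantees that \emph{every} $p$-subgroup of $G$ containing $P$ lies in $\mathcal{C}$: in particular $R \in \mathcal{C}$, and for each $Y \in \mathcal{C}_{>P}$ both $N_Y(P)$ and $N_Y(P)\,R$ belong to $\mathcal{C}$ (the latter is a $p$-group because $R$ is normal in $N_G(P)$ and hence normalized by $N_Y(P) \leq N_G(P)$). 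Moreover $P < N_Y(P)$, since $P$ is a proper subgroup of the finite $p$-group $Y$, so all four of $Y,\, N_Y(P),\, N_Y(P)\,R,\, R$ strictly contain $P$, and a fortiori lie in $\mathcal{C}_{>P}$.

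I will then complete the argument by exhibiting the $N_{N_G(Q)}(P)$-equivariant chain of poset maps on $\mathcal{C}_{>P}$,
$$ Y \;\geq\; N_Y(P) \;\leq\; N_Y(P)\,R \;\geq\; R, $$
which connects the identity map to the constant map with value $R$ and therefore provides the required contracting homotopy. Equivariance is automatic: $N_Y(P) = Y \cap N_G(P)$ commutes with conjugation by elements of $N_G(P)$, and $R$ is normal, hence fixed, in $N_G(P)$. The only substantive check in the whole argument is the bookkeeping in the previous paragraph that $N_Y(P)$ and $N_Y(P)\,R$ remain in $\mathcal{C}$; this is precisely where the overgroup-closure hypothesis is used, and I do not anticipate any further obstacle.
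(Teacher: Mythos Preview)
Your proposal is correct and follows essentially the same approach as the paper: both apply Theorem~2.2(1) to the inclusion $\mathcal{C}'_{>Q}\hookrightarrow\mathcal{C}_{>Q}$ and contract $\mathcal{C}_{>P}$ via the zigzag $Y\geq N_Y(P)\leq N_Y(P)\,O_p(N_G(P))\geq O_p(N_G(P))$, using overgroup-closure to keep all terms in $\mathcal{C}$. You are slightly more explicit than the paper in identifying $(\mathcal{C}_{>Q})_{>P}=\mathcal{C}_{>P}$ and in noting that the relevant equivariance is with respect to $N_{N_G(Q)}(P)$ (the paper states $N_G(P)$-contractibility, which is a fortiori sufficient).
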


\begin{proof}To simplify the notation, we shall denote by $\mathcal{X} = \mathcal{C}_{>Q}$ and by $\mathcal{Y} = \mathcal{C}'_{>Q}$. We will prove that $\mathcal{X}_{>P}$ is $N_G(P)$-contractible for all $P \in \mathcal{X} \setminus \mathcal{Y}$. Then, an application of Theorem $2.2(1)$ will give the result.
Note that $\mathcal{X}_{>P}= \lbrace R \in \mathcal{C} \; | \; P<R \rbrace $.
Denote $O_{NP}=O_p(N_G(P))$; since $P$ is not $p$-radical, $P < O_{NP}$.
By elementary group theory, for $P<R$, we have $P < N_R(P) \leq R$.
Since $\mathcal{C}$ is closed under passage to $p$-overgroups, the subgroups $N_R(P), N_R(P)O_{NP}$ and $O_{NP}$
are also in $\mathcal{X}_{>P}$. Thus we obtain a contracting homotopy given by the string of
$N_G(P)$-equivariant poset maps $\mathcal{X}_{>P} \rightarrow \mathcal{X}_{>P}$ :
$$R \geq N_R(P) \leq N_R(P) O_{NP} \geq O_{NP}.$$
Hence $\mathcal{X}_{>P}$ is $N_G(P)$-contractible.
\end{proof}

\begin{prop} Let $G$ be a finite group of parabolic characteristic $p$.
The inclusion $\widehat{\mathcal{S}}_p(G)^{\leq C}_{>T} \hookrightarrow
\widehat{S}_p^T(G)$ is a $N_G(T)$-homotopy equivalence.
\end{prop}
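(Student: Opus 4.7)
The plan is to display the inclusion as a composite of four $N_G(T)$-homotopy equivalences routed through the ambient collection $\widetilde{\mathcal{S}}_p^T(G)$, with a 2-out-of-3 argument at the end. Consider the chain
$$\widehat{\mathcal{S}}_p(G)^{\leq C}_{>T}\ \hookrightarrow\ \widehat{\mathcal{S}}_p(G)_{>T}\ \hookrightarrow\ \widetilde{\mathcal{S}}_p(G)_{>T}\ =\ \widetilde{\mathcal{S}}_p^T(G)_{>T}\ \hookrightarrow\ \widetilde{\mathcal{S}}_p^T(G)\ \hookleftarrow\ \widehat{\mathcal{S}}_p^T(G),$$
in which the last map is Proposition $3.8$ restricted to $T$-fixed points, automatically an $N_G(T)$-homotopy equivalence (a $G$-homotopy equivalence of posets restricts to an $H$-homotopy equivalence on $H$-fixed subposets for every subgroup $H$). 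If each of the other three inclusions is also an $N_G(T)$-homotopy equivalence, the composite inclusion $\widehat{\mathcal{S}}_p(G)^{\leq C}_{>T} \hookrightarrow \widetilde{\mathcal{S}}_p^T(G)$ is one; since it coincides with the target inclusion $\widehat{\mathcal{S}}_p(G)^{\leq C}_{>T} \hookrightarrow \widehat{\mathcal{S}}_p^T(G)$ followed by the Proposition $3.8$ map, 2-out-of-3 forces the target inclusion to be an $N_G(T)$-homotopy equivalence.

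Each of the three remaining inclusions is established via Theorem $2.2$. The rightmost, $\widetilde{\mathcal{S}}_p^T(G)_{>T} \hookrightarrow \widetilde{\mathcal{S}}_p^T(G)$, follows from Theorem $2.2(3)$ with the $N_G(T)$-equivariant map $F(Q)=QT$: since $T$ normalizes $Q$, the product is a $p$-group, is $T$-invariant, and inherits a $p$-central element from $Q$; moreover $QT>T$ because $T$ contains no $p$-central elements, whence $Q \not\leq T$. The leftmost, $\widehat{\mathcal{S}}_p(G)^{\leq C}_{>T} \hookrightarrow \widehat{\mathcal{S}}_p(G)_{>T}$, follows from Theorem $2.2(3)$ with $F(Q)=C_Q(T)=Q \cap C$: when $T \leq Q$, any $p$-central $z \in Z(Q)$ commutes with $T$, so $z \in Z(C_Q(T))$; and since $z \not\in T$, the image $C_Q(T)$ remains distinguished and strictly contains $T$. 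The middle inclusion $\widehat{\mathcal{S}}_p(G)_{>T} \hookrightarrow \widetilde{\mathcal{S}}_p(G)_{>T}$ follows from Theorem $2.2(2)$: for every $P \in \widetilde{\mathcal{S}}_p(G)_{>T}$ the chain $Q \geq N_Q(P) \leq N_Q(P) R_P \geq R_P$ from the proof of Proposition $3.8$ contracts $\widehat{\mathcal{S}}_p(G)_{\geq P}$ to $R_P$ in an $N_G(P)$-equivariant fashion, hence a fortiori $N_{N_G(T)}(P)$-equivariantly. Finally, $\widetilde{\mathcal{S}}_p(G)_{>T}=\widetilde{\mathcal{S}}_p^T(G)_{>T}$ because $T \leq Q$ automatically forces $T \leq N_G(Q)$.

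The chief subtlety---and the reason for routing through $\widetilde{\mathcal{S}}_p^T$ rather than staying inside $\widehat{\mathcal{S}}_p^T$---is that the naive map $Q \mapsto QT$ applied directly to $\widehat{\mathcal{S}}_p^T(G)$ need not land in $\widehat{\mathcal{S}}_p$: when $T$ acts nontrivially on the $p$-central elements of $Z(Q)$, the $T$-trace of such an element can vanish in characteristic $p$, so $Z(QT)$ may contain no $p$-central element even though $QT$ itself obviously does. Relaxing the condition ``$p$-central in the center'' to ``contains a $p$-central element somewhere'' by passing to $\widetilde{\mathcal{S}}_p$ sidesteps this obstruction, and Proposition $3.8$ then returns the argument to the distinguished collection via its radical-closure contracting chain.
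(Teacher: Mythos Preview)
Your proof is correct and follows essentially the same route as the paper's own argument: both pass through the chain $\widehat{\mathcal{S}}_p^T(G) \simeq \widetilde{\mathcal{S}}_p^T(G) \simeq \widetilde{\mathcal{S}}_p(G)_{>T} \simeq \widehat{\mathcal{S}}_p(G)_{>T} \simeq \widehat{\mathcal{S}}_p(G)^{\leq C}_{>T}$, routing through $\widetilde{\mathcal{S}}_p$ for exactly the reason you identify. The only differences are cosmetic: the paper invokes Theorem~2.2(4) where you write out the map $Q \mapsto QT$ directly; the paper cites Proposition~4.3 (a Grodal--Sawabe result using $O_p(N_G(P))$) for the step $\widetilde{\mathcal{S}}_p(G)_{>T} \simeq \widehat{\mathcal{S}}_p(G)_{>T}$ where you reuse the radical-closure chain from Proposition~3.8; and the paper writes $N_Q(T)$ where you write $C_Q(T)$, though these coincide for $p$-subgroups containing the order-$p$ group $T$.
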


\begin{proof} We verify the following chain of $N_G(T)$-homotopy equivalences:
$$\widehat{\mathcal{S}}_p^T(G) \simeq \widetilde{\mathcal{S}}_p^T(G)  \simeq \widetilde{\mathcal{S}}_p(G)_{\geq T}
= \widetilde{\mathcal{S}}_p(G)_{>T} \simeq \widehat{\mathcal{S}}_p(G)_{>T} \simeq
\widehat{\mathcal{S}}_p(G)^{\leq N_G(T)}_{>T} = \widehat{\mathcal{S}}_p(G)^{\leq C}_{>T}.$$

The subcollections $\widehat{\mathcal{S}}_p^T(G)$ and $\widetilde{\mathcal{S}}_p^T(G)$ are $N_G(T)$-homotopy equivalent because $\widetilde{\mathcal{S}}_p(G)$ and $\widehat{\mathcal{S}}_p(G)$ are $G$-homotopy equivalent. The next step follows by an application of Theorem $2.2(4)$. The collection $\widetilde{\mathcal{S}}_p(G)$ is closed under passage to $p$-overgroups; thus the inclusion $\widetilde{\mathcal{S}}_p(G)_{\geq T} \hookrightarrow \widetilde{\mathcal{S}}_p^T(G)$ is a homotopy equivalence. Next, since $T \not \in \widetilde{\mathcal{S}}_p(G)$ it follows that $\widetilde{\mathcal{S}}_p(G)_{\geq T} = \widetilde{\mathcal{S}}_p(G)_{>T}$. The homotopy equivalence between $\widetilde{\mathcal{S}}_p(G)_{>T}$ and $\widehat{\mathcal{S}}_p(G)_{>T}$ follows from an application of Proposition $4.3$ with $\mathcal{C} = \widetilde{\mathcal{S}}_p(G)$ and $\mathcal{C}'=\widehat{\mathcal{S}}_p(G)$.
Note that Proposition $3.5(b)$ provides the necessary hypothesis
$\mathcal{C} \cap \mathcal{B}_p(G) \subseteq \mathcal{C}'$.
To see that $\widehat{\mathcal{S}}_p(G)^{\leq N_G(T)}_{>T} \hookrightarrow \widehat{\mathcal{S}}_p(G)_{>T}$ is a homotopy equivalence, consider the poset map $\widehat{\mathcal{S}}_p(G)_{>T} \rightarrow \widehat{\mathcal{S}}_p(G)_{>T}$ given by $Q \mapsto N_Q(T)$ whose image lies in $\widehat{\mathcal{S}}_p(G)^{\leq N_G(T)}_{>T}$, and apply
Theorem $2.2(3)$. The final equality follows from $T \leq P \leq N_G(T)$ if and only if $T \leq P \leq C$, since
$T \trianglelefteq P$ implies that $T \leq Z(P)$.
\end{proof}

\begin{lem}Let $G$ be a finite group of parabolic characteristic $p$.
Then $O_C \in \widetilde{\mathcal{S}}_p(G)$ if and only if $C$ has characteristic $p$.
\end{lem}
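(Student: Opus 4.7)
The plan is to prove the two directions separately, with the key auxiliary observation, used in both directions, that $T \leq O_C$. This holds because $T$ is centralized by $C = C_G(T)$, so $T \leq Z(C)$, which makes $T$ a normal $p$-subgroup of $C$ and therefore $T \leq O_p(C) = O_C$. In particular, $O_C \neq 1$, so membership of $O_C$ in $\widetilde{\mathcal{S}}_p(G)$ reduces to the question of whether $O_C$ contains a $p$-central element of $G$.

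For the forward implication, assume $O_C \in \widetilde{\mathcal{S}}_p(G)$. Then Proposition $3.5(a)$ gives that $N_G(O_C)$ has characteristic $p$. I will then invoke Proposition $2.4(2)$ inside the characteristic-$p$ group $N_G(O_C)$, taking the $p$-subgroup to be $O_C$ itself and the intermediate subgroup to be $H = C$. The three containments to verify are: $O_C \leq C$ (immediate); $C \leq N_G(O_C)$ (since $O_C \trianglelefteq C$); and $C_{N_G(O_C)}(O_C) = C_G(O_C) \leq C$, which follows from the observation that $T \leq O_C$ so $C_G(O_C) \leq C_G(T) = C$. Proposition $2.4(2)$ then yields that $C$ has characteristic $p$.

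For the reverse implication, suppose $C$ has characteristic $p$, so that $C_C(O_C) \leq O_C$. I will use the Sylow setup already fixed in Notation $4.1$: $T \leq S_T \in \mathrm{Syl}_p(C)$ and $S_T \leq S \in \mathrm{Syl}_p(G)$, with $Z(S) \leq Z(S_T)$. Since $O_C \trianglelefteq C$, we have $O_C \leq S_T$, and hence $Z(S_T)$ centralizes $O_C$. Therefore $Z(S) \leq Z(S_T) \leq C_C(O_C) \leq O_C$. Every element of $Z(S)$ is $p$-central in $G$, so $O_C$ contains $p$-central elements, and combined with $O_C \neq 1$ we conclude $O_C \in \widetilde{\mathcal{S}}_p(G)$.

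The only nontrivial step is noting that $T \leq O_C$, which is needed both to guarantee $C_G(O_C) \leq C$ in the forward direction and to ensure $O_C$ is nontrivial in the reverse. Once this is in place, the two directions are short applications of the tools already recorded in Section $2$ together with Proposition $3.5(a)$, so no further machinery is required.
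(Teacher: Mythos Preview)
Your proof is correct and follows essentially the same route as the paper's: Proposition~3.5(a) followed by Proposition~2.4(2) for the forward direction, and the inclusion $Z(S)\leq O_C$ (equivalently, $p$-centricity of $O_C$) for the converse. The only cosmetic differences are that the paper applies Proposition~2.4(2) with the $p$-subgroup $T$ rather than $O_C$, and in the converse it phrases the conclusion as ``$O_C$ is $p$-centric, hence $O_C\in\widehat{\mathcal{S}}_p(G)\subseteq\widetilde{\mathcal{S}}_p(G)$'' instead of directly exhibiting $Z(S)\leq O_C$.
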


\begin{proof}
If $O_C \in \widetilde{\mathcal{S}}_p(G)$ then $N_G(O_C)$ has characteristic $p$, by Proposition $3.5(a)$.
As $T \leq O_C \trianglelefteq C \leq N_G(O_C)$, it follows that $C_{N_G(O_C)}(T)=C_G(T)=C$ and so
$C= T \cdot C_{N_G(O_C)}(T)$. Thus $C$ has characteristic $p$, by Proposition $2.4(2)$. Conversely,
assume that $C$ has characteristic $p$. Note $T \leq O_C$, so $C_G(O_C) \leq C$. Thus $C_G(O_C) =
C_C(O_C) \leq O_C$, and so $O_C$ is $p$-centric. Thus $O_C \in \widehat{\mathcal{S}}_p(G) \subseteq
\widetilde{\mathcal{S}}_p(G)$.
\end{proof}

\begin{prop}
Let $G$ be a finite group of parabolic characteristic $p$, and assume that
$O_C \in \widetilde{\mathcal{S}}_p(G)$. Then the fixed point set $\widehat{\mathcal{S}}_p(G)^T$ is contractible.
\end{prop}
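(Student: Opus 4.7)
The plan is to use Proposition $4.4$ to reduce to showing $\widehat{\mathcal{S}}_p(G)^{\leq C}_{>T}$ is $N_G(T)$-contractible, and then contract this subcollection onto the vertex $O_C$ via a standard three-step zigzag.

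First I would verify that $O_C$ itself lies in $\widehat{\mathcal{S}}_p(G)^{\leq C}_{>T}$. The hypothesis $O_C \in \widetilde{\mathcal{S}}_p(G)$, combined with Lemma $4.5$, tells us that $C$ has characteristic $p$, and the proof of that lemma already shows $O_C$ is $p$-centric; it is therefore distinguished by Proposition $3.1$. The containment $T \leq O_C$ is automatic, since $T$ is a normal $p$-subgroup of $C = C_G(T)$ (being central in $C$) and hence lies in $O_p(C)$. For the strict inequality, observe that $T$ is of noncentral type and has order $p$, so $T$ contains no $p$-central element, whereas $O_C \in \widetilde{\mathcal{S}}_p(G)$ does; therefore $T \neq O_C$.

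Next, I would contract via the poset endomorphism $Q \mapsto QO_C$ of $\widehat{\mathcal{S}}_p(G)^{\leq C}_{>T}$. For any $Q$ in this subcollection, $Q \leq C$ normalizes the characteristic subgroup $O_C \trianglelefteq C$, so $QO_C$ is a $p$-subgroup of $C$. Since $QO_C \geq O_C$ and the class of $p$-centric subgroups is closed under passage to $p$-overgroups, $QO_C$ is $p$-centric, hence distinguished by Proposition $3.1$; moreover $T < Q \leq QO_C \leq C$, so $QO_C$ remains in the subcollection. The zigzag $Q \leq QO_C \geq O_C$ of $N_G(T)$-equivariant poset maps then gives a contracting homotopy to the vertex $O_C$. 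Equivariance is clear because $N_G(T)$ stabilizes $C = C_G(T)$ and therefore its characteristic subgroup $O_C$.

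The only delicate point is the strict inclusion $T < O_C$, which rests on combining the noncentral hypothesis on $T$ with the information packaged in $O_C \in \widetilde{\mathcal{S}}_p(G)$; everything else is a routine application of the same three-step contraction strategy used throughout Section $3$, so I expect no real obstacle.
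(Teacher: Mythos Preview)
Your argument is correct and follows essentially the same strategy as the paper: reduce via Proposition~4.4, then contract using the $N_G(T)$-equivariant poset map $Q \mapsto QO_C$ onto the cone with vertex $O_C$. The only cosmetic difference is that the paper performs the contraction inside $\widetilde{\mathcal{S}}_p(G)^{\leq C}_{>T}$ (where closure under $p$-overgroups is automatic once $O_C \in \widetilde{\mathcal{S}}_p(G)$), whereas you stay in $\widehat{\mathcal{S}}_p(G)^{\leq C}_{>T}$ and invoke $p$-centricity of $O_C$ from Lemma~4.5 to see that $QO_C$ is distinguished; both routes amount to the same contraction.
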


\begin{proof}
Consider the poset map $\varphi : \widetilde {\mathcal{S}}_p(G)^{\leq C}_{>T}
\rightarrow \widetilde {\mathcal{S}}_p(G)^{\leq C}_{>T}$ given by $\varphi (P)=P \cdot O_C$. If $O_C$ contains $p$-central elements, the poset map $\varphi$
will have image equal to $\widetilde{S}_p(G)^{\leq C}_{\geq O_C}$; this is contractible, a cone on $O_C$. Apply Theorem $2.2(3)$; combining this with Proposition $4.4$ proves the result.
\end{proof}

\begin{prop} Let $G$ be a finite group of parabolic characteristic $p$, and assume that $C$
does not have characteristic $p$.
The inclusion $\mathcal{X} = \widetilde {\mathcal{S}}_p(G)^{\leq C}_{>O_C} \hookrightarrow
\mathcal{Y} = \widetilde {\mathcal{S}}_p(G)^{\leq C}_{>T}$ is a $N_G(T)$-homotopy equivalence.
\end{prop}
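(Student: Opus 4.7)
The plan is to apply Theorem 2.2(3) to an endomorphism of $\mathcal{Y}$ whose image lands in $\mathcal{X}$, mimicking the strategy used in Proposition 4.6 but replacing the cone-on-$O_C$ argument with an inclusion argument. Concretely, I define the poset map
$$\varphi : \mathcal{Y} \longrightarrow \mathcal{Y}, \qquad \varphi(P) = P \cdot O_C.$$
First I would check that $\varphi$ is well defined, that is, that $\varphi(P) \in \mathcal{Y}$. Since $O_C \triangleleft C$ and $P \leq C$, the product $P \cdot O_C$ is a $p$-subgroup of $C$; it properly contains $T$ because $P$ does; and it lies in $\widetilde{\mathcal{S}}_p(G)$ because it contains $P$, which already has a $p$-central element of $G$ in it. The map is order preserving and satisfies $\varphi(P) \geq P$. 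It is $N_G(T)$-equivariant because $O_C = O_p(C_G(T))$ is characteristic in $C = C_G(T)$, and $C$ is normal in $N_G(T)$, so $O_C$ is normalized by $N_G(T)$.

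Next I would verify that $\varphi$ takes values in $\mathcal{X}$, i.e.\ that $\varphi(P) > O_C$ for every $P \in \mathcal{Y}$. Equivalently I must show $P \not\leq O_C$. This is precisely where the hypothesis that $C$ does not have characteristic $p$ enters: by Lemma 4.5, the assumption on $C$ implies that $O_C \notin \widetilde{\mathcal{S}}_p(G)$, so $O_C$ contains no $p$-central element of $G$. On the other hand, $P \in \mathcal{Y} \subseteq \widetilde{\mathcal{S}}_p(G)$ by definition contains a $p$-central element of $G$. Therefore $P$ cannot be a subgroup of $O_C$, and $\varphi(P) = P \cdot O_C$ properly contains $O_C$. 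Thus $\varphi(\mathcal{Y}) \subseteq \mathcal{X}$.

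Finally, I would invoke Theorem 2.2(3) with $\mathcal{C} = \mathcal{Y}$, $F = \varphi \geq \mathrm{Id}_{\mathcal{Y}}$, and $\mathcal{C}' = \mathcal{X}$ (noting that $\mathcal{X} \subseteq \mathcal{Y}$ holds trivially, since $T \leq O_C < P$ for any $P \in \mathcal{X}$). That theorem then yields that the inclusion $\mathcal{X} \hookrightarrow \mathcal{Y}$ is an $N_G(T)$-homotopy equivalence, as required.

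The only nonroutine step is the one leveraging Lemma 4.5 to rule out $P \leq O_C$; everything else is a direct check that $\varphi$ fits the hypotheses of the Thévenaz--Webb/Grodal--Smith machinery already cited in the paper.
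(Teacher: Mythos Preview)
Your proof is correct and follows essentially the same approach as the paper: both define the $N_G(T)$-equivariant poset endomorphism $\varphi(P)=P\cdot O_C$ of $\mathcal{Y}$, use the fact (from Lemma~4.5) that $O_C$ is purely noncentral to ensure the image lands in $\mathcal{X}$, and then apply Theorem~2.2(3). You have simply spelled out the well-definedness, equivariance, and image checks that the paper leaves implicit.
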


\begin{proof} The poset map $\varphi : \widetilde {\mathcal{S}}_p(G)^{\leq C}_{>T}
\rightarrow \widetilde {\mathcal{S}}_p(G)^{\leq C}_{>T}$ given by $\varphi (P)=P \cdot O_C$
now has image in $\widetilde {\mathcal{S}}_p(G)^{\leq C}_{>O_C}$ since $O_C$ is
purely noncentral, containing no $p$-central elements of $G$.
The result follows by an application of Theorem $2.2(3)$.
\end{proof}

\begin{prop} Let $G$ be a finite group of parabolic characteristic $p$.
The inclusion $\mathcal{X} = \widehat {\mathcal{S}}_p(G)^{\leq C}_{>H}
\hookrightarrow \mathcal{Y} = \widetilde {\mathcal{S}}_p(G)^{\leq C}_{>H}$ is a $N_G(T)$-homotopy
equivalence, where $H$ satisfies $T \leq H \leq C$ and $N_G(T) \leq N_G(H)$.
\end{prop}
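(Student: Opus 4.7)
The plan is to verify the hypothesis of Theorem $2.2(1)$ with $N_G(T)$ playing the role of the ambient group, so that the inclusion $\mathcal{X}\hookrightarrow\mathcal{Y}$ is promoted to an $N_G(T)$-homotopy equivalence. Concretely, I will show that for every $P\in\mathcal{Y}\setminus\mathcal{X}$ the subposet $\mathcal{Y}_{>P}$ is $N_{N_G(T)}(P)$-contractible via a chain of poset maps modelled on the ones in Propositions $3.7$ and $4.3$.

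Any such $P$ satisfies $T\leq H<P\leq C$, contains a $p$-central element, yet has $\widehat{P}=1$. Because $P\in\widetilde{\mathcal{S}}_p(G)$, Proposition $3.5$(a) yields that $N_G(P)$ has characteristic $p$, whereupon Lemma $3.3$ identifies $O_{NP}:=O_p(N_G(P))$ as $p$-centric and distinguished. Combining Proposition $3.5$(b) with $P\notin\widehat{\mathcal{S}}_p(G)$ shows that $P$ is not $p$-radical, and therefore $P<O_{NP}$.

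The key auxiliary subgroup is the distinguished part $K:=\widehat{O_{NP}}$, generated by the $p$-central elements lying in $\Omega_1 Z(O_{NP})$. It is nontrivial since $O_{NP}$ is distinguished, and characteristic in $O_{NP}$, hence normal in $N_G(P)$. Its generators commute with every element of $O_{NP}$, and in particular with $T\leq P\leq O_{NP}$, so $K\leq C_G(T)=C$. Moreover $K\not\leq P$: otherwise these $p$-central generators would sit inside $Z(P)$ and force $\widehat{P}\neq 1$, contradicting the choice of $P$. Consequently $P<PK\leq C$, and $PK\in\widetilde{\mathcal{S}}_p(G)$ because it contains $P$.

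The contracting homotopy on $\mathcal{Y}_{>P}$ is then the $N_{N_G(T)}(P)$-equivariant chain of poset maps
$$R\ \geq\ N_R(P)\ \leq\ N_R(P)\cdot K\ \geq\ P\cdot K,$$
ending at the constant subgroup $P\cdot K\in\mathcal{Y}_{>P}$. Each intermediate subgroup is a $p$-overgroup of $P$ (using $P<N_R(P)\leq R$ from elementary $p$-group theory), so all lie in $\widetilde{\mathcal{S}}_p(G)$; all are contained in $C$ since $R,K\leq C$; and all strictly contain $H$ because they contain $P>H$. Equivariance is automatic because the normalizer construction is $N_G(P)$-equivariant and $K$ is characteristic in $N_G(P)$. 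The principal obstacle, and the reason for introducing $K$ in place of $O_{NP}$ itself as used in Propositions $3.7$ and $4.3$, is that $O_{NP}$ is not in general contained in $C$ (it need not centralize $T$), whereas its distinguished part $K$ does so automatically; this is precisely what keeps the contracting homotopy inside the $\leq C$ subposet.
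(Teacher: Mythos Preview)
Your proof is correct and takes a genuinely different route from the paper's. The paper applies Theorem~2.2(2) rather than 2.2(1): for every $P\in\mathcal{Y}$ it shows that $\mathcal{X}_{\geq P}$ is equivariantly contractible, using as auxiliary subgroup $Z(R_P)$ (the center of the radical closure of $P$) in place of your $K=\widehat{O_{NP}}$, via the zig-zag $Q\geq N_Q(P)\leq N_Q(P)\,Z(R_P)\geq P\cdot Z(R_P)$. Because that zig-zag must stay inside the \emph{distinguished} collection $\mathcal{X}$, the paper has to work to verify that $N_Q(P)Z(R_P)$ is distinguished, which it does by a careful choice of Sylow subgroups. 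Your approach sidesteps this: since you contract $\mathcal{Y}_{>P}$ inside the \emph{tilde} collection, membership is automatic once each term contains $P$. The price you pay is the extra argument that $K\not\leq P$ (to guarantee $PK>P$), and this is exactly where the hypothesis $P\notin\mathcal{X}$ is used. Both auxiliary subgroups land in $C$ for the same reason --- they lie in the center of a $p$-group containing $T$ --- so the ``principal obstacle'' you flag is resolved analogously in the paper via $Z(R_P)\leq C_G(P)\leq C$.

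One terminological quibble: $K=\widehat{O_{NP}}$ is not literally \emph{characteristic} in $O_{NP}$ (nor in $N_G(P)$), since an abstract automorphism need not preserve $\Gamma_p(G)$. What you need, and what holds, is that $K$ is \emph{normalized} by $N_G(O_{NP})\supseteq N_G(P)$, because conjugation in $G$ preserves $p$-centrality; this suffices for the equivariance claim.
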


\begin{proof}
Let $P \in \mathcal{Y}$; we will show that $\mathcal{X}_{\geq P}$ is equivariantly contractible and apply Theorem $2.2(2)$. Let $R_P$ be the radical closure of $P$; note that $T < P \leq R_P$, so that
$Z(R_P) \leq C$.  By Proposition $3.5(b)$, $R_P$ is $p$-centric and distinguished.  This implies that $Z(R_P)$ is distinguished, and also $P \cdot Z(R_P)$ is distinguished, since $Z(R_P) \leq Z(P \cdot Z(R_P))$. Let $Q \in \mathcal{X}_{\geq P}$. $P \leq Q$ implies $Z(Q) \leq Z(N_Q(P))$, so that $N_Q(P)$ is distinguished. Observe that $N_Q(P) \leq N_G(P) \leq N_G(R_P)$, implying $N_Q(P) Z(R_P)$ is a group.
Choose $S_R \in Syl_p(N_G(R_P))$ satisfying $N_Q(P) \leq S_R$, and extend to
$S \in Syl_p(G)$. Note $R_P \leq S_R \leq S$. Then $Z(S) \leq C_G(N_Q(P))$ and $Z(S) \leq C_G(R_P)$, so that $Z(S) \leq Z(R_P)$. This implies that $Z(S) \leq Z(N_Q(P)Z(R_P))$, so that $N_Q(P)Z(R_P)$ is distinguished.
Now consider the string of equivariant poset maps $\mathcal{X}_{\geq P} \rightarrow
\mathcal{X}_{\geq P}$ given by:
$$Q \geq N_Q(P) \leq N_Q(P) Z(R_P) \geq P \cdot Z(R_P)$$
which proves the equivariant contractibility of the subcollection $\mathcal{X}_{\geq P}$.
\end{proof}

\bigskip
There is no obvious relationship among those elements which are $p$-central in $G$, or in $C$, or in $\overline{C}$. In order to overcome this difficulty, we define a subcollection $\frak{S}$ of $\widehat{\mathcal{S}}_p(G)^{\leq C}_{>O_C}$ as follows:

\begin{dft}
Assume the notation from $4.1$ and set:
$$\frak{S} = \lbrace P \in \widehat{\mathcal{S}}_p(G)^{\leq C} _{>O_C} \; \Big| \;
Z(P) \cap Z(S) \not = 1, \; \text{for \; some} \; S_T \; {\rm and} \; S \; {\rm with} \;
P \leq S_T \leq S \rbrace$$
\end{dft}

This subcollection is also contained in $\widehat{\mathcal{S}}_p(C)$,
and it contains all $p$-centric subgroups $P$ in $C$ which properly contain $O_C$, since if $P \leq S_T \leq S$, then $Z(S) \leq C_G(P)$, implying $Z(S) \leq Z(P)$.

\bigskip
\begin{prop} Let $G$ be a finite group of parabolic characteristic $p$.
The inclusion $\frak{S} \hookrightarrow \widehat{\mathcal{S}}_p(G)^{\leq C} _{>O_C}$
is an $N_G(T)$-homotopy equivalence.
\end{prop}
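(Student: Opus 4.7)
\medskip
\textbf{Proof plan.} The plan is to invoke Theorem $2.2(2)$: I would verify that $\frak{S}_{\geq P}$ is $N_{N_G(T)}(P)$-contractible for every $P \in \widehat{\mathcal{S}}_p(G)^{\leq C}_{>O_C}$, which yields the desired $N_G(T)$-homotopy equivalence. The canonical contracting target will be the \emph{radical closure of $P$ inside $C$}, namely the stable term $R_P^C$ of the chain $P_0 = P$, $P_{i+1} = O_p(N_C(P_i))$; by construction $R_P^C = O_p(N_C(R_P^C))$.

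The central preliminary is to show that $R_P^C \in \frak{S}$. Since $P$ is distinguished, some $p$-central element of $G$ lies in $Z(P) \subseteq R_P^C$, so $R_P^C \in \widetilde{\mathcal{S}}_p(G)$ and Proposition $3.5(a)$ gives that $N_G(R_P^C)$ has characteristic $p$. Because $T \leq O_C \leq P \leq R_P^C$, the subgroup $T$ is a $p$-subgroup of $N_G(R_P^C)$, so Proposition $2.4(2)$ forces $N_C(R_P^C) = C_{N_G(R_P^C)}(T)$ to have characteristic $p$ as well. Combined with the defining equality $R_P^C = O_p(N_C(R_P^C))$ this gives $C_C(R_P^C) \leq R_P^C$; since $T \leq R_P^C$ forces $C_G(R_P^C) \leq C$, I conclude $C_G(R_P^C) = Z(R_P^C)$, i.e. $R_P^C$ is $p$-centric in $G$ (hence distinguished by Proposition $3.1$). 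Picking any $S_T \in \text{Syl}_p(C)$ with $R_P^C \leq S_T$ and extending to $S \in \text{Syl}_p(G)$, the inclusion $Z(S) \leq C_G(R_P^C) = Z(R_P^C)$ forces $Z(R_P^C) \cap Z(S) = Z(S) \neq 1$, so $R_P^C \in \frak{S}$.

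For each $Q \in \frak{S}_{\geq P}$ I would then build the three-step $N_{N_G(T)}(P)$-equivariant poset-map chain
\[ Q \;\geq\; N_Q(P) \;\leq\; N_Q(P)\cdot R_P^C \;\geq\; R_P^C \]
as the contracting homotopy. The subgroup $N_Q(P)$ satisfies $Z(Q) \leq Z(N_Q(P))$, so any Sylow pair witnessing $Q \in \frak{S}$ also witnesses $N_Q(P) \in \frak{S}$. The product $N_Q(P)\cdot R_P^C$ is a $p$-subgroup of $C$ because $N_Q(P) \leq N_C(P) \leq N_C(R_P^C)$ normalizes $R_P^C$; since it contains $R_P^C$, the containment $C_G(N_Q(P)\cdot R_P^C) \leq C_G(R_P^C) = Z(R_P^C) \leq N_Q(P)\cdot R_P^C$ makes it $p$-centric in $G$ and, by the same Sylow argument, a member of $\frak{S}$. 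Equivariance under $N_{N_G(T)}(P)$ is automatic, since each $g \in N_G(T) \cap N_G(P)$ normalizes both $C$ and $P$ and therefore fixes the canonically defined subgroup $R_P^C$.

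The main obstacle is recognizing that one should replace the radical closure $R_P$ in $G$ (used in Propositions $3.8$ and $4.8$) by the radical closure $R_P^C$ in $C$: this keeps every subgroup in sight inside $C$ and, crucially, lets the containment $T \leq R_P^C$ drive the chain of characteristic-$p$ deductions $N_G(R_P^C) \to N_C(R_P^C) \to C_G(R_P^C) = Z(R_P^C)$ that places $R_P^C$ in $\frak{S}$. Once this canonical overgroup is in hand, the remainder of the proof is a direct variant of the three-step homotopy technique of Propositions $3.7$, $3.8$ and $4.8$.
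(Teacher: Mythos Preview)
Your proof is correct and follows essentially the same route as the paper's: show $\frak{S}_{\geq P}$ is equivariantly contractible for each $P \in \widehat{\mathcal{S}}_p(G)^{\leq C}_{>O_C}$ via a three-step zigzag landing on a $p$-centric subgroup of $C$, and invoke Theorem~$2.2(2)$. The only difference is that where you iterate to the full radical closure $R_P^C$ of $P$ in $C$, the paper stops after a single step with $O_P := O_p(N_C(P))$; since $N_C(P)$ already has characteristic~$p$ by precisely the argument you give (Proposition~$3.5(a)$ for $N_G(P)$, then Proposition~$2.4(2)$ for $N_C(P) = C_{N_G(P)}(T)$), Lemma~$3.3$ applied inside $C$ makes $O_P$ $p$-centric in $C$, and hence in $G$ because $T \leq O_P$, without any iteration. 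The resulting contracting homotopy $Q \geq N_Q(P) \leq N_Q(P)\cdot O_P \geq O_P$ matches yours verbatim.
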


\begin{proof}We will apply Theorem $2.2(2)$ once again. We need to show that $\frak{S}_{\geq Q}$ is
equivariantly contractible whenever $Q \in \widehat{\mathcal{S}}_p(G)^{\leq C} _{>O_C}$. Since $Q$ is a
distinguished $p$-subgroup of $G$, $N_G(Q)$ has characteristic $p$, by Proposition $3.5(a)$. Also
$QC_G(Q) \leq N_{C}(Q) \leq N_G(Q)$ since $T < Q \leq C$; by Proposition $2.4(2)$, it follows that
$N_{C}(Q)$ has characteristic $p$. Set $O_Q = O_p(N_{C}(Q))$ and observe that, according to Lemma $3.3$, $O_Q$ is $p$-centric. For $P \leq C$, $N_P(Q)O_Q$ is a group which is also $p$-centric.  Thus $O_Q$ and $N_P(Q)O_Q$ lie in $\frak{S}$. Assume now that $P \in \frak{S}_{\geq Q}$ and
consider the contracting homotopy given by the following string of equivariant poset maps:
$$P \geq N_P(Q) \leq N_P(Q) O_Q \geq O_Q.$$
This concludes the proof of the Proposition.
\end{proof}

\begin{prop} Let $G$ be a finite group of parabolic characteristic $p$, and assume that $C$ does not have characteristic $p$. Also assume that $\overline{C}$ has parabolic characteristic $p$. Then the map $q_*: \frak{S} \rightarrow \widehat{\mathcal{S}}_p(\overline{C})$ induced by the quotient map $q: C \rightarrow \overline{C}$ is a homotopy equivalence.
\end{prop}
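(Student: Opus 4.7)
The plan is to apply Quillen's Theorem A to $q_*$: for every $\overline{Q} \in \widehat{\mathcal{S}}_p(\overline{C})$, I will show the upper fiber $q_*^{-1}(\widehat{\mathcal{S}}_p(\overline{C})_{\geq \overline{Q}})$ is contractible. Because every $P \in \frak{S}$ contains $O_C$ and $q$ restricts to a bijection between subgroups of $C$ containing $O_C$ and subgroups of $\overline{C}$, this fiber is exactly $\frak{S}_{\geq Q_0}$, where $Q_0 := q^{-1}(\overline{Q})$ is the preimage sitting above $O_C$. The homotopy equivalence follows once $\frak{S}_{\geq Q_0}$ is shown contractible for every $\overline{Q}$.

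Well-definedness of $q_*$ comes first. The hypothesis that $C$ does not have characteristic $p$ combined with Lemma $4.5$ gives $O_C \notin \widetilde{\mathcal{S}}_p(G)$, so $O_C$ contains no $p$-central element of $G$; in particular $Z(S) \cap O_C = 1$. For any $P \in \frak{S}$ with witness $z \in Z(P) \cap Z(S) \setminus \{1\}$, this forces $\overline{z} \neq 1$, and since $Z(S) \leq Z(S_T)$, the image $\overline{z}$ lies in $Z(\overline{P}) \cap Z(\overline{S_T})$ as a nontrivial $p$-central element of $\overline{C}$. Hence $\overline{P} \in \widehat{\mathcal{S}}_p(\overline{C})$.

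The natural anchor for the fiber contraction is $O_{Q_0} = O_p(N_C(Q_0))$; the correspondence theorem yields $q(O_{Q_0}) = O_{\overline{Q}} := O_p(N_{\overline{C}}(\overline{Q}))$. Since $\overline{C}$ has parabolic characteristic $p$ and $\overline{Q} \in \widetilde{\mathcal{S}}_p(\overline{C})$, Proposition $3.5(a)$ applied in $\overline{C}$ shows $N_{\overline{C}}(\overline{Q})$ has characteristic $p$, and Lemma $3.3$ in $\overline{C}$ makes $O_{\overline{Q}}$ a $\overline{C}$-centric subgroup. I then verify $O_{Q_0} \in \frak{S}$: pick a Sylow $S_T \geq O_{Q_0}$ of $C$, extend to $S \in \text{Syl}_p(G)$, and observe $Z(S) \leq C_C(O_{Q_0})$, so $\overline{Z(S)}$ is a $p$-subgroup of $C_{\overline{C}}(O_{\overline{Q}}) = Z(O_{\overline{Q}}) \times K$ with $K$ of order prime to $p$, forcing $\overline{Z(S)} \leq Z(O_{\overline{Q}}) \leq O_{\overline{Q}}$ and hence $Z(S) \leq O_{Q_0}$. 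Combined with $Z(S) \leq C_G(O_{Q_0})$ and $Z(S) \cap O_C = 1$, this yields a nontrivial $Z(S) \leq Z(O_{Q_0}) \cap Z(S)$, so $O_{Q_0}$ satisfies the $\frak{S}$ condition.

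With $O_{Q_0} \in \frak{S}_{\geq Q_0}$ in hand, I would contract $\frak{S}_{\geq Q_0}$ via the zigzag of $N_G(T)$-equivariant poset maps
\[
P \;\geq\; N_P(Q_0) \;\leq\; N_P(Q_0) \cdot O_{Q_0} \;\geq\; O_{Q_0},
\]
in the familiar pattern of Propositions $3.7$ and $4.8$. Each intermediate term contains $Q_0$ and inherits the $\frak{S}$ condition: $N_P(Q_0)$ via $Z(P) \leq Z(N_P(Q_0))$; the product $N_P(Q_0) \cdot O_{Q_0}$ is a $p$-group because $N_P(Q_0)$ normalizes $Q_0$ and hence its image in $\overline{C}$ normalizes the characteristic subgroup $O_{\overline{Q}} \trianglelefteq N_{\overline{C}}(\overline{Q})$, giving $N_P(Q_0) \leq N_C(O_{Q_0})$; and the $\frak{S}$ condition for the product follows by rerunning the centricity argument with a Sylow of $C$ containing it. The principal obstacle is lifting the $\overline{C}$-centricity of $O_{\overline{Q}}$ back to the $\frak{S}$ condition for $O_{Q_0}$; once that step is secured, the zigzag and Quillen's fiber criterion complete the proof.
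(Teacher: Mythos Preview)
Your proposal is correct and follows essentially the same route as the paper: identify the upper fiber over $\overline{Q}$ with $\frak{S}_{\geq Q_0}$ (where $Q_0=q^{-1}(\overline{Q})$), and contract it via the zigzag $P \geq N_P(Q_0) \leq N_P(Q_0)O_{Q_0} \geq O_{Q_0}$, using that $N_{\overline{C}}(\overline{Q})$ has characteristic $p$ to control $O_{Q_0}$. The only cosmetic difference is that the paper pushes the key computation one step further, showing $C_G(O_Q)=C_C(O_Q)\leq q^{-1}(C_{\overline{C}}(O_{\overline{Q}}))\leq q^{-1}(O_{\overline{Q}})=O_Q$, so that $O_Q$ (and hence its $p$-overgroup $N_P(Q)O_Q$) is $p$-centric in $G$ and therefore lies in $\frak{S}$ by the remark following Definition~4.9; you instead verify the $\frak{S}$ condition for $O_{Q_0}$ directly by trapping $Z(S)$ inside $Z(O_{Q_0})$, which amounts to applying the paper's inclusion only to the subgroup $Z(S)\leq C_C(O_{Q_0})$.
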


\begin{proof}
To see that $q_*(\frak{S}) \subseteq \widehat{\mathcal{S}}_p(\overline {C})$, let $P \in \frak{S}$. Recall that $Z(S) \leq Z(S_T)$ and $\overline{S_T} \in Syl_p(\overline{C})$. Thus $\overline{Z(S)} \leq \overline{Z(S_T)} \leq Z( \overline{S_T} )$. Since $O_C$ is purely noncentral (Lemma $4.5$), the map $q : C \rightarrow \overline{C}$ is injective on the $p$-central elements of $Z(S)$. Therefore $Z(S) \cap Z(P) \not = 1$ implies $Z(\overline{S_T}) \cap Z(\overline{P})\not = 1$, and we have $q_*(P)= \overline{P} \in \widehat{\mathcal{S}}_p(\overline {C})$.

\medskip
According to a result of Th{\'e}venaz and Webb \cite[Thm.1]{tw91}, the poset map
$q_*: \frak{S} \rightarrow \widehat{\mathcal{S}}_p(\overline{C})$ is an equivariant homotopy equivalence if
$q_*^{-1}(\widehat{\mathcal{S}}_p(\overline{C})_{\geq {\overline{Q}}})$ is equivariantly contractible for any
$\overline{Q} \in \widehat{\mathcal{S}}_p(\overline{C})$. Define $Q = q^{-1}(\overline{Q})$.
Then $q_*^{-1}(\widehat{\mathcal{S}}_p(\overline{C})_{\geq \overline{Q}}) =
\lbrace P \in \frak{S} | \overline{Q} \leq \overline{P} \rbrace =
\lbrace P \in \frak{S} | Q \leq P \rbrace$, since $O_C \leq P$. This latter set is just
$\frak{S}_{\geq Q}$.
Given $P \in \frak{S}_{\geq Q}$, consider the string of equivariant poset maps
$\frak{S}_{\geq Q} \rightarrow \frak{S}_{\geq Q}$ given by:
$$P \geq N_P(Q) \leq N_P(Q) O_Q \geq O_Q.$$

We need to show that all of these terms lie in $\frak{S}_{\geq Q}$.  Note that
$N_P(Q) \leq P \leq S_T \leq S$, and $Q \leq P$ implies that $Z(P) \leq Z(N_P(Q))$.
Thus $N_P(Q) \in \frak{S}$. Next, we have $N_C(Q) = q^{-1}(N_{\overline{C}}(\overline{Q}))$,
using $O_C \leq Q$.  Thus $O_Q := O_p(N_C(Q))$ is equal to $q^{-1}(O_{\overline{Q}})$ by the
correspondence theorem for normal subgroups applied to $N_C(Q) \rightarrow N_{\overline{C}}
(\overline{Q})$. Since $\overline{Q} \in \widehat{\mathcal{S}}_p(\overline {C})$,
$N_{\overline{C}}(\overline{Q})$ has characteristic $p$, by Proposition $3.5(a)$.
Then $C_{\overline{C}}(O_{\overline{Q}}) \leq O_{\overline{Q}}$. Since $T \leq Q \leq O_Q$,
$C_G(O_Q) \leq C_G(T)=C$ and so $C_G(O_Q)=C_C(O_Q)$. Therefore $C_G(O_Q) = C_C(O_Q) \leq
q^{-1}(C_{\overline{C}}(O_{\overline{Q}})) \leq q^{-1}(O_{\overline{Q}}) = O_Q$. The
group $O_Q$ is $p$-centric in $G$. But $\frak{S}$ contains all subgroups of $C$, properly containing
$O_C$, which are $p$-centric in $G$. The $p$-overgroup $N_P(Q)O_Q$ is also $p$-centric
and lies in $\frak{S}$.
\end{proof}

\medskip
We can now state the main result of this section.

\begin{thm} Maintain the notation in $4.1$ and the hypotheses in Remark $4.2$.
There is an $N_G(T)$-equivariant homotopy equivalence
$\widehat{\mathcal{S}}^T_p(G) \simeq  \widehat{\mathcal{S}}_p(\overline{C})$.
\end{thm}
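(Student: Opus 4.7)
The plan is to chain together the $N_G(T)$-equivariant homotopy equivalences established in Propositions 4.4, 4.7, 4.8, 4.10 and 4.11, assembling the sequence
$$\widehat{\mathcal{S}}^T_p(G) \; \simeq \; \widehat{\mathcal{S}}_p(G)^{\leq C}_{>T} \; \simeq \; \widetilde{\mathcal{S}}_p(G)^{\leq C}_{>T} \; \simeq \; \widetilde{\mathcal{S}}_p(G)^{\leq C}_{>O_C} \; \simeq \; \widehat{\mathcal{S}}_p(G)^{\leq C}_{>O_C} \; \simeq \; \frak{S} \; \simeq \; \widehat{\mathcal{S}}_p(\overline{C}).$$
The first link is Proposition 4.4, which itself packages four equivalences; after unfolding it together with the five further links below, one recovers the nine equivalences advertised at the start of Section 4. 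The second and fourth links are two applications of Proposition 4.8, the first with $H = T$ and the second with $H = O_C$; they swap the $\widehat{\,\,}$ and $\widetilde{\,\,}$ variants of the poset over a fixed base. The third link is Proposition 4.7, whose hypothesis that $C$ does not have characteristic $p$ is exactly clause (2) of Remark 4.2; by Lemma 4.5 this forces $O_C$ to be purely noncentral in $G$, which is precisely what makes $\varphi(P) = P \cdot O_C$ land in the $>O_C$ slice. The fifth link is Proposition 4.10, and the sixth is Proposition 4.11, into which clause (3) of Remark 4.2 — that $\overline{C}$ has parabolic characteristic $p$ — is fed.

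The only verification I need to supply before invoking Proposition 4.8 with $H = O_C$ is that $T \leq O_C \leq C$ and $N_G(T) \leq N_G(O_C)$. The inclusion $T \leq O_C$ holds because $T$ is a normal $p$-subgroup of $C = C_G(T)$ and therefore lies in $O_p(C) = O_C$. The normalizer condition follows because $O_C$ is characteristic in $C$ and $C$ is normal in $N_G(T)$. Equivariance of the composite is automatic since each link is built as an $N_G(T)$-equivariant map.

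I do not anticipate a genuine obstacle at this stage: all of the technical content — passing from $G$-centric information to $\overline{C}$-centric information, tracking $p$-central elements through the quotient $q$, and constructing the bridging subcollection $\frak{S}$ that reconciles the two notions of centrality — has already been absorbed into the preceding propositions. Theorem 4.12 is therefore essentially a bookkeeping concatenation, and the hardest single step in the argument was really the preparation of Proposition 4.11, which is already in hand.
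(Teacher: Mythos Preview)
Your proposal is correct and follows exactly the same chain of six $N_G(T)$-homotopy equivalences as the paper, invoking Propositions 4.4, 4.8 (with $H=T$), 4.7, 4.8 (with $H=O_C$), 4.10, and 4.11 in the same order. Your added verification that $T \leq O_C$ and $N_G(T) \leq N_G(O_C)$, together with the bookkeeping that unfolding Proposition 4.4 yields the advertised nine equivalences, are helpful details the paper leaves implicit.
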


\begin{proof}
We have the chain of $N_G(T)$-homotopy equivalences:
$$\widehat{\mathcal{S}}^T_p(G) \simeq \widehat{\mathcal{S}}_p(G)^{\leq C}_{>T} \simeq
\widetilde{\mathcal{S}}_p(G)^{\leq C}_{>T} \simeq \widetilde{\mathcal{S}}_p(G)^{\leq C}_{>O_C} \simeq
\widehat{\mathcal{S}}_p(G)^{\leq C}_{>O_C} \simeq \frak{S} \simeq \widehat{\mathcal{S}}_p(\overline{C}).$$
The first step is Proposition $4.4$; then apply Proposition $4.8$ with $H = T$. Next, use
Proposition $4.7$, and then Proposition $4.8$ again with $H=O_C$. Finally, a combination of
Propositions $4.10$ and $4.11$ completes the proof of the theorem.
\end{proof}

\medskip
\section{Examples and Lefschetz modules}

We will discuss three examples, and give an application to modular representation theory.
Recall that if a group $G$ acts on a simplicial complex $\Delta$, we can construct the
virtual Lefschetz module by taking the alternating sum of the vector spaces (over a field
of characteristic $p$) spanned by the chains. To obtain the reduced Lefschetz module,
subtract the trivial one dimensional representation. Information about fixed point sets
leads to details about the vertices of irreducible summands of this module.\\

{\it The Fischer group $Fi_{22}$ and $p=2$}\\
We begin with the sporadic simple Fischer group $Fi_{22}$,
which has parabolic characteristic $2$ and has three conjugacy classes of involutions,
denoted $2A, 2B$ and $2C$ in the Atlas \cite{atlas}. The class $2B$ is 2-central. Their centralizers are $C_{Fi_{22}}(2A)=2.U_6(2)$,
\ $C_{Fi_{22}}(2B)=(2 \times 2^{1+8}_+:U_4(2)):2$ and $C_{Fi_{22}}(2C)=2^{5+8}:(S_3 \times 3^2:4)$.

\medskip
We consider the simplicial complex $\Delta$ whose vertex stabilizers are the four 2-local subgroups of $Fi_{22}$.
\begin{align*}
H_1 & = (2 \times 2^{1+8}_+ : U_4(2)) : 2 && H_2  = 2^{5+8}:(S_3 \times A_6)\\
H_3 & = 2^6:Sp_6(2) && H_4 = 2^{10}:M_{22}
\end{align*}

The flag stabilizers are listed below.
\begin{align*}
H_{1,2}&=2^{5+8}:(S_3 \times S_4) &H_{1,2,3}&=2^6:2^5:(2 \times S_4)\\
H_{1,3}&=2^6:2^5:S_6 & H_{1,2,4}&=2^{5+8}:(2 \times S_4)\\
H_{1,4}&=2^{10}:2^4:S_5 & H_{1,3,4}&=2^6:2^5:(2 \times S_4)\\
H_{2,3}&=2^6:2^{1+6}:(S_3 \times S_3) &  H_{2,3,4}&=2^6:2^{1+6}:(S_3 \times 2)\\
H_{2,4}&=2^{5+8}:(2 \times A_6) & H_{1,2,3,4}&=2^6:2^{1+6}:2^2\\
H_{3,4}&=2^6:2^6:L_3(2)
\end{align*}

The subgroup complex $\Delta$ is also known as the ``standard" $2$-local geometry for $Fi_{22}$. The geometry $\Delta$ is $G$-homotopy equivalent to $\mathcal{B}_2^{\rm cen}(Fi_{22})$, and since $Fi_{22}$ has parabolic characteristic $2$, this is equal to the distinguished collection $\widehat{\mathcal{B}}_2(Fi_{22})$; for details we refer the reader to Benson and Smith \cite[Sections 8.16 and 9.4]{bs04}.

\medskip
We shall use the notation from the Modular Atlas homepage \cite{moc}, where $\varphi _i$ denotes an irreducible module
of $Fi_{22}$ and $P_{Fi_{22}}(\varphi_i)$ is its corresponding projective cover.

\begin{prop}
Let $\Delta$ be the standard 2-local geometry for the Fischer group $Fi_{22}$.

\item[(a)] The reduced Lefschetz module is
$$\widetilde{L}_{Fi_{22}}(\Delta) = -P_{Fi_{22}}(\varphi_{12})-P_{Fi_{22}}(\varphi_{13}) - 6 \varphi_{15} -12 P_{Fi_{22}}(\varphi_{16}) -\varphi_{16}.$$
\item[(b)] The fixed point sets $\Delta^{2B}$ and $\Delta^{2C}$ are contractible.
\item[(c)] The fixed point set $\Delta^{2A}$ is equivariantly homotopy equivalent to
the building for the Lie group $U_6(2)$.
\item[(d)] There is precisely one nonprojective summand of the reduced Lefschetz module, it has vertex $\langle 2A \rangle$ and lies in a block with the same group as defect group.
\end{prop}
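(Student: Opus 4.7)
The plan is to dispatch (b) and (c) as direct applications of the results of Sections $3$ and $4$, and then to use the resulting fixed-point information to establish (a) and (d).

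For (b), the class $2B$ is $2$-central by hypothesis, so Remark $3.10$ applied to $\Delta\simeq\widehat{\mathcal{B}}_2(Fi_{22})$ gives the contractibility of $\Delta^{2B}$. For the class $2C$, the given structure $C_{Fi_{22}}(2C)=2^{5+8}{:}(S_3\times 3^2{:}4)$ shows that $O_2(C_{Fi_{22}}(2C))=2^{5+8}$ is self-centralizing in $C_{Fi_{22}}(2C)$, so this centralizer has characteristic $2$. By Lemma $4.5$, $O_C\in\widetilde{\mathcal{S}}_2(Fi_{22})$, and Proposition $4.7$ then yields the contractibility of $\Delta^{2C}$.

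For (c), the centralizer $C=C_{Fi_{22}}(2A)=2.U_6(2)$ has $O_2(C)$ equal to the central $\langle 2A\rangle$, which is properly centralized by the whole double cover, so $C$ does not have characteristic $2$. The quotient $\overline{C}=U_6(2)$ is a Lie type group in defining characteristic $2$, hence of local, and so of parabolic, characteristic $2$. Thus the hypotheses of Remark $4.2$ are met, and Theorem $4.12$ yields an $N_{Fi_{22}}(\langle 2A\rangle)$-equivariant homotopy equivalence $\Delta^{2A}\simeq\widehat{\mathcal{S}}_2(U_6(2))$. Since $U_6(2)$ has local characteristic $2$, Propositions $3.4$ and $3.5(b)$ give $\widehat{\mathcal{B}}_2(U_6(2))=\mathcal{B}_2^{\rm cen}(U_6(2))=\mathcal{B}_2(U_6(2))$, and the $2$-radical subgroups of $U_6(2)$ are exactly the unipotent radicals of its proper parabolics; this poset is $U_6(2)$-equivariantly isomorphic to the Tits building. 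Combining with Proposition $3.8$ proves (c).

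For (a), the strategy is to expand
$$\widetilde{L}_{Fi_{22}}(\Delta)=-1_{Fi_{22}}+\sum_{\sigma}(-1)^{\dim\sigma}\mathrm{Ind}_{H_\sigma}^{Fi_{22}}1_{H_\sigma}$$
over $Fi_{22}$-orbit representatives of simplices using the eleven flag stabilizers listed above, restrict to $2$-regular classes, and decompose the resulting Brauer character in the Modular Atlas basis for $Fi_{22}$ modulo $2$. The arithmetic is routine but lengthy; this is the main obstacle. Part (d) then follows from vertex theory for Lefschetz modules: an indecomposable non-projective summand of $\widetilde{L}_{Fi_{22}}(\Delta)$ must have vertex $V$ for which $\Delta^V$ is not mod-$2$ acyclic. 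By parts (b) and (c), among subgroups of order $2$ only $\langle 2A\rangle$ yields a non-acyclic fixed set, since the Tits building of $U_6(2)$ carries nonzero top-dimensional homology by Solomon-Tits. Matching against the decomposition in (a) identifies the unique non-projective summand as $-\varphi_{16}$, with both vertex and defect group equal to $\langle 2A\rangle$.
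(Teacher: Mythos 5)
Your handling of (b) and (c) follows the paper's own route (Remark 3.10 for $2B$, Proposition 4.6 for $2C$, Theorem 4.12 plus the Borel--Tits description of $\mathcal{B}_2(U_6(2))$ for $2A$), but two details are off. For $2C$ you cite Proposition 4.7, which is only one link in the chain of equivalences and in fact assumes $C$ does \emph{not} have characteristic $2$; the statement you need is Proposition 4.6, whose hypothesis is $O_C\in\widetilde{\mathcal{S}}_2(G)$. You supply that hypothesis by asserting that $C_{Fi_{22}}(2C)$ has characteristic $2$ because $2^{5+8}$ is ``self-centralizing''; the displayed isomorphism type alone does not show that the complement $S_3\times 3^2{:}4$ acts faithfully on $O_2$, so this is an unjustified structural claim. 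The paper instead checks the hypothesis of Proposition 4.6 directly: the elementary abelian $2^5\le 2^{5+8}$ has involution fusion $2A_6B_{15}C_{10}$, so $O_C$ visibly contains $2$-central ($2B$) involutions. For (a) you, like the paper, ultimately defer to a machine computation of the alternating sum of induced characters (the paper used GAP), which is acceptable; but note that (a) as stated is an identity of virtual modules, and a Brauer-character computation only establishes it in the Grothendieck ring --- the upgrade to the Green ring is exactly what the vertex analysis in (d) provides, so (a) and (d) cannot be cleanly decoupled the way your write-up suggests.

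The genuine gap is in (d). Your criterion (a nonprojective summand must have a vertex $V$ with $\Delta^V$ not mod-$2$ acyclic) is the right tool --- it is Robinson's formulation of the Burry--Carlson/Puig theorem --- but you only test subgroups of order $2$, whereas a vertex can be any $2$-subgroup. You must also exclude every $Q$ with $|Q|\ge 4$: the paper does this by Smith theory (contractibility of $\Delta^{2B}$ and $\Delta^{2C}$ forces $\Delta^Q$ to be mod-$2$ acyclic whenever $Q$ contains a $2B$ or $2C$ involution) together with the observation that $\Delta^Q$ is contractible for any $Q$ of order at least $4$ containing a $2A$ involution, since $\Delta^{2A}$ is equivariantly the $U_6(2)$ building and fixed points of nontrivial $2$-groups on it are contractible. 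Without this step a larger $2$-subgroup could a priori be a vertex. Moreover, ``matching against the decomposition in (a)'' cannot by itself identify the nonprojective part: a character identity does not determine the module-level decomposition, and a summand with vertex $\langle 2A\rangle$ could a priori lie in any block whose defect groups contain a conjugate of $\langle 2A\rangle$. The paper obtains both the count (exactly one nonprojective summand) and its block from the Burry--Carlson/Robinson correspondence applied to $\widetilde{L}_{2.U_6(2)}(\Delta^{2A})$, which is the Steinberg module of $U_6(2)$ viewed as a $2.U_6(2)$-module: it is indecomposable with vertex $\langle 2A\rangle$, and the Brauer correspondence places the corresponding summand of $\widetilde{L}_{Fi_{22}}(\Delta)$ in block $3$, whose cyclic defect group of order two admits $\varphi_{16}$ as its unique nonprojective indecomposable. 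Your sketch needs these two ingredients to reach the stated conclusion.
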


\begin{proof}
(a) The first two terms of the reduced Lefschetz module, the projective covers of $\varphi_{12}$ and
$\varphi_{13}$, lie in the principal block. Also $\varphi_{15}$ is projective and lies in block $2$, with defect
zero. Next, $\varphi_{16}$ is not projective, and lies in block $3$ with defect one.
This formula will be shown to be valid at the level of the Green ring
of virtual modules, and not just the Grothendieck ring of characters.

\smallskip
The alternating sum of the induced characters was computed using GAP\cite{gap}, and the character corresponding to the formula for $\widetilde{L}_{Fi_{22}}(\Delta)$ given above was obtained. The eight terms lying in $H_3 = 2^6:Sp_6(2)$ can be combined to yield the character of the inflation of the Steinberg module for the symplectic group.

\medskip
(b) Proposition $3.9$ (also see Remark $3.10$) tells us that the fixed point set $\Delta^{2B}$ is contractible.
The contractibility of $\Delta ^{2B}$ implies that $\Delta ^Q$ is mod $2$ acyclic for any $2$-group $Q$
containing an involution of type $2B$ (by Smith theory), and thus the reduced Lefschetz module
$\widetilde{L}_{N_G(Q)}(\Delta ^Q)=0$.
A theorem due to Burry and Carlson \cite[Theorem 5]{bc82} and to Puig (unpublished) was applied by Robinson
\cite[in Corollary 3.2]{rob88} to Lefschetz modules to obtain the following result; also see \cite[Lemma 1]{sa06}. The number of indecomposable summands
of $\widetilde{L}_G(\Delta)$ with vertex $Q$ is equal to the number of
indecomposable summands of $\widetilde{L}_{N_G(Q)}(\Delta ^Q)$ with the same vertex $Q$.
The proof of this result uses the Green correspondence, and the relationship to the Brauer
correspondence permits a conclusion regarding the blocks in which the summands lie.
This implies that the vertices of the indecomposable
summands of $\widetilde{L}_{Fi_{22}}(\Delta)$ do not contain any $2$-central involutions.

\smallskip
Proposition $4.6$ applies to the element $2C$ since $O_C = O_2(C_{Fi_{22}}(2C))$
contains $2$-central elements. The elementary abelian $2^5 \leq 2^{5+8}$ is of the type
$2^5 = 2A_6B_{15}C_{10}$; there is a purely $2$-central $2^4 \leq 2^5$.
Note that $\Delta^{2C}$ being contractible implies that no vertex
of a summand of $\widetilde{L}_{Fi_{22}}(\Delta)$ contains an involution of type $2C$.

\medskip
(c) Theorem $4.12$ applies to the fixed point set $\Delta^{2A}$, since the quotient group of the centralizer $C_{Fi_{22}}(2A)/O_2(C_{Fi_{22}}(2A))=U_6(2)$ is a Lie group. Therefore $\Delta^{2A}$ is homotopy
equivalent to the building for $U_6(2)$. The reduced Lefschetz module associated to this action
on the building is the irreducible Steinberg module. The vertex under the action of $2.U_6(2)$
is $\langle 2A \rangle$.  Thus there exists one indecomposable summand of the
Lefschetz module with vertex $2 = \langle 2A \rangle$, lying in a block with the same group
as defect group (block $3$ with defect one).  Note that since this block has cyclic defect
group of order two, there is only one nonprojective indecomposable module lying in this
block, namely the irreducible module $\varphi_{16}$.
Also note that since $\Delta^{2A}$ is homotopy equivalent to a building, $\Delta^Q$ will be contractible for any $2$-group $Q$ of order at least four which contains an involution of type $2A$. This implies that such a group $Q$ cannot be a vertex of a summand of $\widetilde{L}_{Fi_{22}}(\Delta)$.

\medskip
\item[(d)] It follows from the previous steps that there is precisely one nonprojective summand of the reduced
Lefschetz module; it has vertex $\langle 2A \rangle$ and it lies in a block with the same group as defect group.
The remaining summands are projective and are determined by their characters, and so
the formula for $\widetilde{L}_{Fi_{22}}(\Delta)$ is valid at the level of the Green ring
of virtual modules, and not just the Grothendieck ring of characters.
\end{proof}

\bigskip
{\it The Conway group $Co_3$ and $p=3$}\\
The group $Co_3$ has parabolic characteristic $3$, and it has three conjugacy
classes of elements of order three, denoted $3A$, $3B$ and $3C$, with the $3$-central elements
being those of type $3A$. The normalizers are $N_{Co_3}(\langle 3A \rangle)=3^{1+4}_+:4S_6$,
$N_{Co_3}(\langle 3B \rangle)=3^5(2 \times S_5)$ and $N_{Co_3}(\langle 3C \rangle)=S_3 \times
L_2(8):3$. Note that $L_2(8):3 = Ree(3) = \; ^2G_2(3)$ is a Chevalley group having local characteristic $3$.

\begin{prop} Let $\Delta$ be the subgroup complex associated to the distinguished $3$-radical collection $\widehat{\mathcal{B}}_3(Co_3)$.
\item[(a)] The fixed point sets $\Delta^{3A}$ and $\Delta^{3B}$ are contractible, and $\Delta^{3C}$
is equivariantly homotopy equivalent to the building for $^2G_2(3)$.
\item[(b)] The reduced Lefschetz module $\widetilde{L}_{Co_3}(\Delta)$ has precisely one
nonprojective irreducible summand, which has vertex $\langle 3C \rangle$ and lies in a block
with the same group as defect group.
\end{prop}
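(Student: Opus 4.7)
The plan is to replay the three-step analysis used for $Fi_{22}$ with the $3$-local data for $Co_3$ substituted, using Proposition $3.8$ and Remark $3.10$ to transfer fixed-point statements freely between $\widehat{\mathcal{B}}_3(Co_3) = \Delta$ and the other distinguished collections.

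For part (a), the $3$-central class $3A$ is handled immediately by Proposition $3.9$ and Remark $3.10$, which yield $N_G(\langle 3A\rangle)$-contractibility of $\Delta^{3A}$. For the class $3B$, the strategy is to verify the hypothesis of Proposition $4.6$: namely $O_C \in \widetilde{\mathcal{S}}_3(Co_3)$, where $C = C_{Co_3}(3B)$. From $N_{Co_3}(\langle 3B\rangle)=3^5(2 \times S_5)$ one reads off $O_C = 3^5$, and I would invoke Atlas-level fusion data to confirm that this elementary abelian $3^5$ meets the $3A$-class, in direct analogy with the $2^5 = 2A_6 B_{15} C_{10}$ decomposition used in the $Fi_{22}$ argument. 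Lemma $4.5$ then shows $C$ has characteristic $3$, and Proposition $4.6$ gives contractibility of $\Delta^{3B}$. For the class $3C$, the splitting $N_{Co_3}(\langle 3C\rangle) = S_3 \times L_2(8){:}3$, with the nontrivial involution of the $S_3$ factor inverting $3C$, forces $C = \langle 3C\rangle \times L_2(8){:}3$. Since $L_2(8){:}3 = \,^2G_2(3)$ is almost simple with trivial $O_3$, we obtain $O_C = \langle 3C\rangle$ and $\overline{C} = \,^2G_2(3)$; also $C$ itself does not have characteristic $3$ because $C_C(O_C) = C \not\leq O_C$. Being Chevalley in defining characteristic $3$, $\,^2G_2(3)$ has local, and hence parabolic, characteristic $3$, so the hypotheses of Remark $4.2$ are satisfied. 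Theorem $4.12$ then delivers an $N_G(\langle 3C\rangle)$-equivariant homotopy equivalence $\Delta^{3C} \simeq \widehat{\mathcal{S}}_3(\,^2G_2(3))$, and local characteristic $3$ together with Propositions $3.4$, $3.5$(b) and Quillen's theorem identifies this complex with the rank-one Tits building for $\,^2G_2(3)$.

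Part (b) follows from the Burry--Carlson--Puig correspondence exactly as in the $Fi_{22}$ proof. Contractibility of $\Delta^{3A}$ and $\Delta^{3B}$, combined with Smith theory, forces $\Delta^Q$ to be $\mathbb{F}_3$-acyclic for every $3$-group $Q$ meeting class $3A$ or $3B$, so no such $Q$ appears as the vertex of a nonprojective summand of $\widetilde{L}_{Co_3}(\Delta)$. A Smith-theoretic argument on the $\,^2G_2(3)$-building, together with $3$-local fusion data for $Co_3$, excludes pure-$3C$ $3$-subgroups of order at least $9$, leaving $\langle 3C\rangle$ as the only candidate vertex. On the $3C$ side, $\widetilde{L}_{N_G(\langle 3C\rangle)}(\Delta^{3C})$ is, via inflation through $N_G(\langle 3C\rangle) \twoheadrightarrow N_G(\langle 3C\rangle)/\langle 3C\rangle \cong \mathbb{Z}/2 \times \,^2G_2(3)$, the Steinberg module of $\,^2G_2(3)$; the Steinberg is irreducible and projective as a $\,^2G_2(3)$-module, so after inflation it is an irreducible indecomposable with vertex exactly $\langle 3C\rangle$. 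Burry--Carlson--Puig then produces the unique nonprojective indecomposable summand of $\widetilde{L}_{Co_3}(\Delta)$ with vertex $\langle 3C\rangle$, and the Brauer-correspondence clause of that result places this summand in a block whose defect group is also $\langle 3C\rangle$, as claimed.

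The main obstacle is the $3B$ step: confirming that $O_3(C_{Co_3}(3B))$ contains a $3A$-element is a fusion fact supplied by the Atlas rather than by the general theory of Section $4$. A secondary difficulty in part (b) is the exclusion of pure-$3C$ $3$-subgroups of order $\geq 9$ as vertices, which again relies on specific $3$-local information about $Co_3$ (the structure of a Sylow $3$-subgroup and its class-fusion pattern) rather than on the paper's general machinery. Once these two group-theoretic inputs are in hand, the rest of the argument is a direct translation of the $Fi_{22}$ proof.
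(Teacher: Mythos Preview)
Your proposal is correct and follows essentially the same approach as the paper: Proposition~3.9/Remark~3.10 for $3A$, Proposition~4.6 for $3B$ via the Atlas decomposition $3^5 = 3A_{55}B_{66}$, Theorem~4.12 for $3C$, and the Burry--Carlson--Puig/Robinson argument for part~(b). One small simplification: your ``secondary difficulty'' about excluding pure-$3C$ subgroups of order $\geq 9$ does not actually require additional fusion data; the paper observes directly that since $\Delta^{3C}$ is \emph{equivariantly} homotopy equivalent to a building, the fixed point set $\Delta^{Q}$ under any $3$-group $Q$ of order at least nine containing a $3C$-element (hence with $Q/\langle z\rangle$ acting nontrivially on the building for some central $z$ of type $3C$, or else $z$ of type $3A$ or $3B$) is contractible.
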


\begin{proof}
Proposition $3.9$ (and Remark $3.10$) implies that the fixed point set $\Delta^{3A}$ of the
$3$-central element is contractible. Proposition $4.6$ implies that $\Delta^{3B}$ is contractible since $O_C=O_3(C_{Co_3}(3B))=3^5=3A_{55}B_{66}$ contains $3$-central elements.
The reduced Lefschetz module $\widetilde{L}_{Co_3}(\Delta)$ has no summands with a vertex containing either an element
of type $3A$ or an element of type $3B$, by a similar argument to the one in part (b) of the previous proposition.

\medskip
Theorem $4.12$ applies to $\Delta^{3C}$. The reduced Lefschetz module for the $^2G_2(3)$ building is the irreducible Steinberg module, and under the action of $N_{Co_3}(\langle 3C \rangle)$, the vertex is $\langle 3C \rangle$. Also,
since $\Delta^{3C}$ is equivariantly homotopy equivalent to a building, $\Delta^Q$ will be contractible
for any $3$-group $Q$ of order at least nine which contains an element of type $3C$.
This implies that such a group $Q$ cannot be a vertex of a summand of $\widetilde{L}_{Co_3}(\Delta)$. The aforementioned Burry-Carlson theorem in Robinson's formulation implies now the result of part (b).
\end{proof}

\bigskip
{\it The Harada-Norton group $HN$ and $p=5$}\\
The group $HN$ has parabolic characteristic $5$ and has five conjugacy classes of elements of order $5$, denoted $5A$, $5B$, $5C$, $5D$ and $5E$; see \cite{atlas}. The elements of type $5B$ are $5$-central. An element of type $5D$ is the square
of an element of type $5C$, so they generate a group of order $5$ denoted $5CD$. The centralizers
are $C_{HN}(5A)=5 \times U_3(5)$, $C_{HN}(5B)=5^{1+4}.2^{1+4}.5$, $C_{HN}(5CD)=5^3.SL_2(5)$ and
$C_{HN}(5E)=5 \times 5^{1+2}:2^2$.

\begin{prop} Let $\Delta$ be the subgroup complex associated to the distinguished $5$-radical collection $\widehat{\mathcal{B}}_5(HN)$.
\item[(a)]  The fixed point sets $\Delta^{5B}$, $\Delta^{5CD}$ and $\Delta^{5E}$ are contractible,
and $\Delta^{5A}$ is equivariantly homotopy equivalent to the building for $U_3(5)$.
\item[(b)]  The reduced Lefschetz module $\widetilde{L}_{HN}(\Delta)$ has precisely one
nonprojective summand, which has vertex $\langle 5A \rangle$ and lies in a block with the same
group as defect group.
\end{prop}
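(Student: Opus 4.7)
The plan is to follow the blueprint of Propositions 5.1 and 5.2, handling the four conjugacy classes of elements of order five in turn and then applying the Burry--Carlson--Puig theorem in Robinson's form for the Lefschetz-theoretic statement.

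For part (a), the class $5B$ is 5-central, so Proposition 3.9 together with Remark 3.10 shows directly that $\Delta^{5B} = \widehat{\mathcal{B}}_5(HN)^{\langle 5B \rangle}$ is $N_{HN}(\langle 5B \rangle)$-contractible. For $T = \langle 5CD \rangle$ and $T = \langle 5E \rangle$, the strategy is to locate a 5-central (i.e.\ $5B$-class) element inside $O_C = O_5(C_{HN}(T))$, where in the two cases $O_C$ is respectively the elementary abelian $5^3$ inside $5^3.SL_2(5)$ and the subgroup $5 \times 5^{1+2}$ of order $5^4$ inside $5 \times 5^{1+2}{:}2^2$. Once this is done, Lemma 4.5 shows that $C_{HN}(T)$ has characteristic $5$ and Proposition 4.6 yields contractibility of $\Delta^T$. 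Concretely, the verification is carried out by choosing $S_T \in \mathrm{Syl}_5(C_{HN}(T))$ and extending to $S \in \mathrm{Syl}_5(HN)$; then $Z(S) \leq Z(S_T)$ consists of $5B$-elements, and one checks from the ATLAS subgroup structure that this $Z(S)$ already sits inside $O_C$.

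For the class $5A$, the centralizer $C = 5 \times U_3(5)$ has $O_C = \langle 5A \rangle$, because $U_3(5)$ is simple and hence has no nontrivial normal 5-subgroup. The quotient $\overline{C} = U_3(5)$ is a Chevalley group in its defining characteristic $5$, so it has local, hence parabolic, characteristic $5$; moreover $C_C(O_C) = C \not\leq O_C$, so $C$ does not have characteristic $5$. The three hypotheses of Remark 4.2 are therefore met, and Theorem 4.12 gives $\Delta^{5A} \simeq \widehat{\mathcal{S}}_5(U_3(5))$, which via Proposition 3.7 and the classical identification of the Quillen complex of a Chevalley group in defining characteristic with its Tits building is $N_{HN}(\langle 5A \rangle)$-equivariantly homotopy equivalent to the building of $U_3(5)$.

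Part (b) then runs in parallel to Propositions 5.1(d) and 5.2(b). Robinson's form of the Burry--Carlson--Puig theorem says that indecomposable summands of $\widetilde L_{HN}(\Delta)$ with vertex $Q$ are in bijection with those of $\widetilde L_{N_{HN}(Q)}(\Delta^Q)$ having the same vertex. The contractibility of $\Delta^{5B}$, $\Delta^{5CD}$, $\Delta^{5E}$ excludes these three subgroups of order $5$ as vertices. Because $\Delta^{5A}$ is a building, any $5$-subgroup $Q$ of order at least $25$ containing a $5A$-element has $\Delta^Q$ contractible (a sub-building of the $U_3(5)$-building), so such $Q$ is ruled out as a vertex as well. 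The reduced Lefschetz module of the $U_3(5)$-building is the irreducible Steinberg module, hence contributes exactly one indecomposable summand with vertex $\langle 5A \rangle$; and the standard passage from the Green to the Brauer correspondence forces this summand to lie in a $5$-block whose defect group is $\langle 5A \rangle$. The main obstacle I expect is the ad hoc verification for $5CD$ and $5E$ that $O_C$ actually meets the $5B$-class, since this is not a consequence of the general theory of Section 4 but must be extracted from the specific subgroup structure of $HN$ given in the ATLAS (or confirmed via GAP).
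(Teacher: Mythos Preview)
Your proposal is correct and follows the paper's approach essentially step for step; the paper's proof differs only in that it verifies the presence of $5B$-elements in $O_C$ by citing the explicit class distribution $5^3 = B_6(CD)_{25}$ and the fact $Z(5^{1+2}) = \langle 5B\rangle$, and then applies Proposition~4.6 directly (your detour through Lemma~4.5 is harmless but unnecessary, since the hypothesis of Proposition~4.6 is exactly $O_C \in \widetilde{\mathcal{S}}_p(G)$). Two small imprecisions in part~(b): the fixed point set of a nontrivial $5$-subgroup on the $U_3(5)$-building is contractible not because it is ``a sub-building'' but because it is conically contractible in the usual way; and to exclude \emph{all} $5$-groups other than $\langle 5A\rangle$ as vertices you must also invoke Smith theory (as spelled out in 5.1(b)) to pass from contractibility of $\Delta^{5B}$, $\Delta^{5CD}$, $\Delta^{5E}$ to mod-$5$ acyclicity of $\Delta^Q$ for every $Q$ containing such an element, not just the cyclic ones.
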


\begin{proof}
Proposition $3.9$ (and Remark $3.10$) applies to the $5$-central elements of type $5B$, so that the fixed point set $\Delta^{5B}$ is contractible. The two groups $5^3 = B_6(CD)_{25}$ and $5 \times 5^{1+2}$ contain $5$-central elements
(note that $Z(5^{1+2})=\langle 5B \rangle$), thus according to Proposition $4.6$ the fixed point sets $\Delta^{5CD}$ and $\Delta^{5E}$ are both contractible. Thus the reduced Lefschetz module $\widetilde{L}_{HN}(\Delta)$ has no summands with a vertex
containing elements of type $5B$, $5C$, $5D$ or $5E$.

\medskip
Theorem $4.12$ applies to the elements of type $5A$, so that $\Delta^{5A}$ is equivariantly homotopy equivalent to the building for $U_3(5)$. The reduced Lefschetz
module for this building is the irreducible Steinberg module, and under the action of $N_{HN}(\langle
5A \rangle)$, the vertex is $\langle 5A \rangle$. Thus $\widetilde{L}_{HN}(\Delta)$ has one
summand with vertex $\langle 5A \rangle$ lying in a block with the same group as defect group.
Since $\Delta^{5A}$ is homotopy equivalent to a building, $\Delta^Q$ will be contractible
for any $5$-group $Q$ of order at least $25$ which contains an element of type $5A$.
This implies that such a group $Q$ cannot be a vertex of a summand of $\widetilde{L}_{HN}(\Delta)$.
\end{proof}

\end{document}